\numberwithin{equation}{section}
\renewcommand{\baselinestretch}{1.1}
\begin{document}


\font\srm=cmr8 at 8. truept
\def\bp{{\bar \partial}}
\def\pd{{\partial_\tau}}
\def\bU{{\bar U}}
\def\bW{{\bar W}}
\def\bdel{{\bar{\delta}}}
\def\ubdel{{\underline{\delta}}}
\def\ubk{{\underline k}}
\def\Un2{{U^{n+\frac 12}}}
\def\n2{{n+\frac 12}}
\def\IL{{{I_{\hbox{\srm L}}}}}
\def\Th{{\Theta}}
\def\T_h{{{\mathcal T}_h}}
\def\La{{\Lambda}}
\def\F{{\mathcal{F}}}
\def\G{{\mathcal{G}}}
\def\E{{\mathcal{E}}}
\def\H{{\mathcal{H}}}
\def\<{{\langle }}
\def\>{{\rangle }}
\def\V{{\mathcal{V}}}
\def\U{{\mathcal{U}}}
\def\sg{{\sigma_\gamma}}

\def\tends{\rightarrow}

\def\al{\alpha}
\def\f{\varphi}
\def\wU{\widehat U}
\def\wA{\widehat A}
\def\wa{\widehat a}
\def\he{\hat e}
\def\hU{\hat U}
\def\hR{\hat R}
\def\gam{\gamma}
\def\be{\beta}
\def\la{\lambda}
\def\th{\vartheta}
\def\t{\tau}
\def\eps{\varepsilon}
\def\ds{\displaystyle}
\def\C{{\mathbb C}}
\def\Re {{\mathbb R}}
\def\P{{\mathbb P}}
\def\N{{\mathbb N}}
\def\R+{{\Bbb R}_*^+}
\def\Z{{\mathbb Z}}
\def\e{{\rm e}}
\def\i{{\rm i}}
\def\tr|{|\!|\!|}

\def\tm{\tilde{m}}
\def\tp{\tilde{p}}

\def\tf{f}
\newcommand{\ex}{\mbox{E}}
\newcommand{\var}{\mbox{Var}}
\newcommand{\cov}{\mbox{Cov}}
\newcommand{\cum}{\mbox{cum}}
\newcommand{\beps}{\mbox{\boldmath  $\varepsilon $}}

\newtheorem{lemma}{Lemma}[section]
\newtheorem{theorem}{Theorem}[section]
\newtheorem{remark}{Remark}[section]
\newtheorem{corollary}{Corollary}[section]
\newtheorem{notation}[lemma]{Notation}
\newtheorem{definition}[lemma]{Definition}
\newtheorem{subsec}[lemma]{}

\newcommand{\Proof}{{\sc Proof.}\ }

\def\Box{\vbox{\offinterlineskip\hrule

        \hbox{\vrule\phantom{o}\vrule}\hrule}}

\newcommand{\bull}{\rule{2mm}{2mm}}


\textheight22.25cm
\textwidth17.5cm
\newfam\sansfam
\def\sans{\fam\sansfam\tensans}
\parskip 6pt plus 1pt \parindent 0cm \partopsep 3pt plus 2pt minus 2pt
\def\IC{{\mathbb C }}
\def\IR{{\mathbb R}}
\def\IP{{\mathbb P}}
\def\IN{{\mathbb N}}
\def\IZ{{\mathbb Z}}
\def\IB{{\rm I\kern-0.19em B }}
\def\KIN{{\rm I\kern-0.12em N }}
\def\IQ{{\rm {\vrule depth-0.1ex height 1.5ex}\kern-0.30em Q }}
\newcommand{\vecz}[2]{\left( \hspace{-1mm}\begin{array}{c}#1\\#2
                          \end{array} \hspace{-1mm}\right)}
\newcommand{\vecd}[3]{\left(\hspace{-1mm} \begin{array}{c}#1\\#2\\#3
                          \end{array} \hspace{-1mm}\right)}
\newcommand{\bgamma}{\mbox{\boldmath  $\gamma $}}
\newcommand{\balpha}{\mbox{\boldmath  $\alpha $}}
%
%
\global\font\fontZsetten=cmss12 \global\font\sevenrm=cmr7
\def\fontZsetZD{\hbox{\fontZsetten Z}}
\def\fontZsetZS{{\sevenrm Z}}
\def\ZsetD{\fontZsetZD\mkern-7.2mu\fontZsetZD}
\def\ZsetS{\fontZsetZS\mkern-9.2mu\fontZsetZS}
\def\NsetD{I\mkern-6mu N}
\def\RsetD{I\mkern-6mu R}
\def\QsetD{Q\mkern-9.9mu\raise.02ex\hbox{\vrule height1.3ex width.12ex%
depth0ex}\mkern11mu}
\def\QsetS{Q\mkern-9.9mu\raise.02ex\hbox{\vrule height.9ex width.1ex%
depth0ex}\mkern11mu}
\def\CsetD{C\mkern-9.9mu\raise.02ex\hbox{\vrule height1.3ex width.12ex%
depth0ex}\mkern11mu}
\def\CsetS{C\mkern-9.9mu\raise.02ex\hbox{\vrule height.9ex width.1ex%
depth0ex}\mkern11mu}
\def\Zset{{\mathbb Z}}
\def\Nset{\mathpalette{}\NsetD}
\def\Rset{\mathpalette{}\RsetD}
\def\Qset{\mathchoice{\QsetD}{\QsetD}{\scriptstyle\QsetS}{\scriptstyle\QsetS}}
\def\Cset{\mathchoice{\CsetD}{\CsetD}{\scriptstyle\CsetS}{\scriptstyle\CsetS}}
\newcommand{\D}{\displaystyle}
\newcommand{\Sc}{\scriptstyle}
\def\Alpha{{\rm A}}
\def\Beta{{\rm B}}
\def\Epsilon{{\rm E}}
\def\Zeta{{\rm Z}}
\def\Eta{{\rm H}}
\def\Iota{{\rm I}}
\def\Kappa{{\rm K}}
\def\Mu{{\rm M}}
\def\Nu{{\rm N}}
\def\omikron{{\sl o}}
\def\Omikron{{\rm O}}
\def\Rho{{\rm P}}
\def\Tau{{\rm T}}
\def\Chi{{\rm X}}
\newlength{\labst}


\title[\sf Bootstrapping   Whittle Estimators]{Bootstrapping  Whittle Estimators}
\author[\sf J.-P. Kreiss]{Jens-Peter Kreiss}
\address{Technische Universit\"at Braunschweig, Institut f\"ur Mathematische Stochastik,
                 Universit\"atsplatz 2, D--38106 Braunschweig, Germany.}
\author[\sf E. Paparoditis]{Efstathios Paparoditis}
\address{University of Cyprus,
         Department of Mathematics and Statistics,
         1678 Nicosia,
         Cyprus.}
\date{\today}

\subjclass[2000]{Primary  62M10, 62M15; secondary 62G09}
\keywords{Periodogram, Bootstrap, Linear process, Nonparametric
kernel estimation, Spectral means.}

\begin{abstract}
Fitting parametric models  by optimizing frequency domain objective functions  is
an attractive approach of parameter estimation in  time series analysis. 
Whittle estimators are a prominent example in this context.
Under weak conditions   
and  the  (realistic)  assumption   that the true spectral density of the underlying process does not necessarily belong to the parametric class of spectral densities fitted, 
the distribution of Whittle estimators typically depends on difficult to estimate characteristics 
of the underlying process.  This makes  the implementation of asymptotic results  for  the construction of  confidence intervals or for  assessing  the variability of   estimators, difficult in practice.  This paper  proposes a frequency domain bootstrap method  to estimate the distribution of Whittle estimators which is asymptotically 
valid 
under   assumptions that  not only allow for (possible) model misspecification but also for  weak dependence conditions   which  are satisfied by  a wide range of stationary stochastic processes. Adaptions of the bootstrap procedure  developed to incorporate   different modifications  of Whittle estimators  proposed in the literature, like for instance, tapered,  de-biased or boundary extended Whittle estimators,  are also  considered. 
Simulations    demonstrate the capabilities of the bootstrap method proposed  and its good finite sample performance.  A real-life data analysis also is presented.   

\end{abstract}

\maketitle


\section{Introduction}
\label{se.Intro}

Optimizing frequency domain objective functions  is
an attractive approach of fitting  parametric models to observed time series.  Let $ X_1, X_2, \ldots, X_n$ be  a time series stemming from a stationary process  $\{X_t,t\in\Z\}$  and assume that 
this process  possesses  a spectral density $f$. 
Let 
$ {\mathcal F}_\theta$ be a  family of parametric spectral densities, where $ f_\theta \in {\mathcal F}_\theta$ is  determined by a $m$-dimensional parameter vector $ \theta$ and suppose that we are interested in fitting to $ X_1, X_2, \ldots, X_n$ a model from the class ${\mathcal F}_\theta$.
Notice that we do not  assume  $ f\in {\mathcal F}_\theta$, that is,  we allow for  the practical important case of  model misspecification where 
  the parametric model class considered  does   not necessarily  contain  the true spectral density  $f$.  
  
 Several approaches for  selecting  a frequency domain objective function and consequently for developing a  frequency domain procedure to fit  parametric models  exist; we refer here to Taniguchi (1987) and to Dahlhaus and Wefelmeyer  (1996) for  examples.  In this context, Whittle estimators play an important role. This is due to the fact that Whittle  estimators are  computationally fast and they are obtained via minimizing a frequency domain approximation of  the (Gaussian) log likelihood function.  Moreover,  for a variety of models and under different assumptions,   Whittle estimators are 
   asymptotically normal, asymptotically equivalent to the exact maximum likelihood estimators and asymptotically efficient (in Fisher-sense) if $ f\in {\mathcal F}_\theta$; see  Sykulski et al. (2019) and   Subba Rao and Yang (2020) for a recent discussion of the related literature. Furthermore and even if $ f \notin {\mathcal F}_\theta$, Whittle estimators   retain certain efficiency properties; Dahlhaus and Wefelmeyer (1996). Despite these nice properties,  however, the limiting distribution of Whittle's estimators  is affected by  characteristics of the process which make the  implementation of  asymptotic results  for assessing their  variability   or for constructing   confidence intervals,  difficult in practice. To elaborate,  allowing for possible model misspecification and avoiding restrictive structural assumptions for the underlying process class, like for instance linearity assumptions,   the limiting distribution of Whittle estimators typically depends  
on   the parametric spectral density from the class $ {\mathcal F}_\theta$ which ``best fits'' the data, say $ f_{\theta_0}$, the true spectral  density $f$ as well as  the entire 
   fourth order cumulant structure    of the underlying process $ \{X_t; t \in \Z\}$.  Estimation of the last mentioned quantity is  a rather difficult problem.

In situations like the above, bootstrapping may offer an alternative to classical large sample approximations. 
Bootstrapping Whittle estimators has been discussed  in the literature  under  certain  structural assumptions on the underlying process  $\{X_t,t\in \Z\}$. It is  typically assumed 
 that  $\{X_t,t\in\Z\}$   is a linear process driven by i.i.d. innovations  and   that Kolmogorov's formula holds true; see Dahlhaus and Janas (1996) and Kim and Nordman (2013).
Such   structural assumptions lead to a simplification of the limiting distribution of Whittle estimators and consequently of   the  features of the underlying linear process that the bootstrap procedure has  to appropriately mimic in order to be consistent. To elaborate,  notice first
that, under  linearity assumptions, the  problem of estimating the variance $\sigma^2$ of the i.i.d. innovations driving   the linear process can be separated from
the problem of estimating the remaining  coefficients of the parameter vector,   denoted by  $ \tau \in \Re^{m-1}$, i.e.,  $\theta=(\sigma^2, \tau)$. Second and more importantly, 
under the linearity assumption,    the limiting distribution of  the Whittle estimator of the parameter  part  $ \tau$,   does not depend on the fourth order cumulants  of the  process; see Section 2 for  details.  
Therefore,  if one is  solely   interested   in estimating  the distribution of the Whittle estimator of  the innovation free part $\tau$, then  standard frequency domain   bootstrap procedures  which 
generate   pseudo periodogram ordinates  that  are independent across frequencies,  can   successfully be applied.
In this context, the  multiplicative bootstrap, see  Hurvich an Zeger (1987), Franke and H\"ardle (1992)
and Dahlhaus and Janas (1996), or the local periodogram bootstrap, Paparoditis and Politis (1999), are  consistent.   
However,  the pseudo  periodogram ordinates generated by the aforementioned bootstrap procedures   are independent across frequencies. Therefore,   these  
bootstrap procedures  are  not able to imitate the covariance structure of the periodogram ordinates which is responsible for the fact that the fourth structure of the process shows up in the limiting distribution of Whittle estimators.  As a  consequence, these procedures fail  in all cases where  
 the  fourth order cumulants  of the  process affect the distribution of  interest. 

In this paper we present a  frequency domain, hybrid  bootstrap procedure for Whittle  estimators which is valid  under  weak assumptions on the underlying process $ \{X_t,t\in\Z\}$ and which also covers the practical important case  where the true spectral density $f$ does not necessarily belong to the parametric class  $ {\mathcal F}_\theta$.
The procedure consists of two main parts: A multiplicative frequency domain bootstrap part and a part based on the convolution of resampled periodograms of subsamples. The latter is an adaption to the frequency domain of  the convolved subsampling idea proposed in Tewes et al. (2019).   The two parts contribute differently  and complementary in  estimating the distribution of interest. The multiplicative part of the bootstrap procedure  is  used to estimate all features   of the distribution of Whittle estimators   including the  parts of the limiting covariance matrix that depend  on the second order structure of the underlying process and of the parametric model fitted to the time series at hand.  However, the components  of the covariance matrix of these  estimators that depend on the fourth order structure of the underlying process $ \{X_t,t\in\Z\}$,   are estimated using the part of the bootstrap procedure which is based on the  convolution of resampled periodograms of  subsamples.  Putting the two parts together in an appropriate way, leads to a bootstrap procedure which is asymptotically valid under  conditions on the dependence structure of the process $\{X_t,t\in\Z\}$ which go far beyond linearity and at the same time   appropriately captures the effects of (possible) model misspecification on the distribution of interest. 

The frequency domain, hybrid approach proposed  in this paper and  which uses convolution of resampled periodograms of subsamples together with the multiplicative periodogram bootstrap, is related to the proposal  of  Meyer et al. (2020). However and additional to differences in the  technical tools used  to establish bootstrap consistency, the merging of the multiplicatve and of the convolved part of the bootstrap procedure presented here,  is  different, more involved and tailormade for Whittle estimators.  Furthermore, we   show, how the described bootstrap procedure  can appropriately be modified, respectively, extended to incorporate several modifications of Whittle estimators which have been proposed  in order to improve the finite (small) sample bias of these estimators. This  concerns  tapered Whittle estimators,  Dahlhaus (1988), so-called de-biased  Whittle estimators, Sykulski et al. (2019),  and improvements of Whittle's  quasi Gaussian  likelihood approximation based on boundary corrected periodograms; see Subba Rao and Yang (2020).  The corresponding  extensions of the frequency domain bootstrap proposed in this context  are  novel and  of 
 interest  on their own.

The paper is organized as follows. In Section 2  we review some basic results on Whittle estimators which are important for our subsequent development  of   the bootstrap. The basic frequency domain bootstrap procedure introduced is presented in Section 3. Section 4 is devoted to the derivation of theoretical results  and  establishes  consistency of the  bootstrap.  Section 5 deals with the extensions  of the basic bootstrap procedure in order to  incorporate the aforementioned  modifications of   standard Whittle estimators.  Section 6 discusses  some issues related to the practical implementation of the bootstrap algorithm,  presents  simulations which investigate the finite sample performance of the new bootstrap method and make comparisons  with  the asymptotic Gaussian approximation, respectively, the multiplicative periodogram bootstrap. A real-life data application also is discussed.
 Auxiliary  lemmas as well as proofs of the main results are deferred  to 
Section 7.

\section{Whittle Estimators}
\label{se.Whittle}

In his PhD thesis Whittle introduced a frequency domain approximation of the log-likelihood function 
 of a stationary Gaussian time series (cf. Whittle (1951) and  Whittle (1953)). This approximation can be written as 
 \begin{align*}
 l_n(\theta) & 
  = 2n\log (2\pi) + \sum_{j\in {\mathcal F}_n}\Big\{\log f(\lambda_{j,n}) + \frac{\displaystyle I_n(\lambda_{j,n}) }{\displaystyle f(\lambda_{j,n}) } \Big\},
 \end{align*}
where 
$ I_n(\lambda_{j,n})$, denotes  the periodogram of  the time series $ X_1, X_2, \ldots, X_n$ evaluated at the Fourier frequency $\lambda_{j,n}= 2\pi j/n \in {\mathcal F}_n$ and  $ {\mathcal F}_n=\{-[(n-1)/2], \ldots, [n/2]\}$ is the set of Fourier frequencies. 
Ignoring the first additive term  and approximating the integral over the set  $ {\mathcal G}(n)=\{ -N, -N+1, \ldots, -1, 1, \ldots, N\}$, where $ N=[n/2]$, 
   Whittle's  approximation to the log-likelihood function used in this paper, is given by 
\begin{equation} \label{eq.whittle-lk-disc}
D_n(\theta,I_n) = \frac{1}{n}\sum_{j\in {\mathcal G}(n)}\Big\{\log f_\theta (\lambda_{j,n}) + \frac{\displaystyle I_n(\lambda_{j,n})}{\displaystyle f_\theta(\lambda_{j,n})}\Big\}.
\end{equation} 
A minimizer $\widehat{\theta}_n$ of 
\begin{equation} \label{eq.Whittle-est}
\theta  \mapsto   D_n(\theta, I_n),
\end{equation}
is  called a Whittle estimator of $ \theta$. Note  that   (\ref{eq.whittle-lk-disc})  can be considered as  a Riemann sum approximation of 
\begin{equation} \label{eq.whittle-lk}
D(\theta, I_n) =  \frac{1}{2\pi}\int_{-\pi}^\pi \Big\{\log f_\theta (\lambda) + \frac{\displaystyle I_n(\lambda)}{\displaystyle f_\theta(\lambda)}\Big\}d\lambda.
\end{equation}   
Let 
\begin{equation} \label{eq.kulb1}
   D(\theta,f)=  \frac{1}{2\pi}\int_{-\pi}^\pi \Big\{\log f_\theta (\lambda) + \frac{\displaystyle f(\lambda)}{\displaystyle f_\theta(\lambda)}\Big\}d\lambda
\end{equation}   
and assume that  
 \begin{equation} \label{eq.theta0}
   \theta_0= \arg \min_{\theta\in\Theta} D(\theta,f),
  \end{equation} 
  exists and is unique. 
$ \widehat{\theta}_n$   is then   an estimator of $ \theta_0$ and 
$f_{\theta_0} $ denotes   the spectral density from the parametric family $ {\mathcal F}_\theta$ which best fits 
the spectral density $f$ of the underlying process $ \{X_t, t \in \Z\}$ in the sense of  minimizing  
 the divergence measure (\ref{eq.kulb1}).  Note  that    if $ f \notin {\mathcal F}_\theta$, then  the best approximating parametric spectral density  $f_{\theta_0}$  from the class $ {\mathcal F}_\theta$,  clearly depends on the particular divergence measure  
$D(\theta,f)$  associated with     Whittle's approximation of the log-likelihood function.  Observe  that  $\theta_0$ which minimizes    $D(\theta,f)$   is the same as the one  which 
 minimizes the so-called Kullback-Leibler information divergence. The later is given  for  Gaussian processes    by 
\[  \frac{1}{2\pi} \int_{-\pi}^\pi \Big\{\log \frac{f_\theta (\lambda)}{f(\lambda)}  + \frac{\displaystyle f(\lambda)}{\displaystyle f_\theta(\lambda)} -1\Big\}d\lambda;\]
see Dahlhaus and Wefelmeyer (1996). 
Different modifications of  Whittle's likelihood  approximation (\ref{eq.whittle-lk})  have been proposed in the literature in order    to improve the finite sample behavior  
and more specifically the   bias of  the estimator $ \widehat{\theta}_n$. We mention here    the tapered Whittle likelihood proposed by Dahlhaus (1988), the debiased Whittle likelihood proposed by    Sykulski et al. (2019),   the boundary corrected  and the  hybrid Whittle likelihood approximation proposed by  Subba Rao and Yang (2020). For the sake of  a better presentation,  however, 
we will first focus on   the  standard    Whittle likelihood approximation  (\ref{eq.whittle-lk-disc}), respectively,  (\ref{eq.whittle-lk}).  Later  on, we will  elaborate on   how to appropriately modify the  basic bootstrap procedure  proposed in order 
 to take into account  the    aforementioned modifications/extensions  of Whittle estimators.

%

Let us briefly review the main ideas  involved in deriving the limiting distribution of the  estimator $ \widehat{\theta}_n$ and which are important for our subsequent 
discussion of the bootstrap.  Consider (\ref{eq.whittle-lk}) and (\ref{eq.kulb1} ), assume that $ f_\theta$  is sufficiently smooth with respect to $ \theta$ and  recall that 
$\widehat{\theta}_n$  and $ \theta_0$ satisfy the score equations
\begin{equation} \label{eq.tayl-1}
 \frac{\displaystyle \partial}{\displaystyle \partial \theta}D_n(\theta, I_n)\Big|_{\theta=\widehat{\theta}_n} = 0 \ \  \ \mbox{and} \  \ 
\frac{\displaystyle \partial}{\displaystyle \partial \theta}D(\theta, f)\Big|_{\theta=\theta_0} = 0,
\end{equation}
respectively. 
Using  a linear approximation of $  (\partial/ \partial \theta)D_n(\theta, I_n)\big|_{\theta=\widehat{\theta}_n}$ around $ \theta_0$ we  get,   taking into account (\ref{eq.tayl-1}), that   
\[ 0 =  \frac{\displaystyle \partial}{\displaystyle \partial \theta}D_n(\theta_0, I_n)  +  
 \frac{\displaystyle \partial^2}{\displaystyle \partial \theta\partial \theta^\top}D_n(\theta_0, I_n) (\widehat{\theta}_n - \theta_0) +  R_n.\]
 Notice that  for simplicity,   the notation $(\partial/\partial \theta)D_n(\theta_0, I_n) $ for $ 
   (\partial/\partial \theta)D_n(\theta, I_n)\big|_{\theta=\theta_0}$ has been used with an  analogue notation   for the matrix of second order partial derivatives 
   $ \partial^2 \big/\partial \theta\partial \theta^\top D_n(\theta_0, I_n)$. Provided that  
   the remainder $R_n$ is $ o_P(n^{-1/2})$  and that   $(\partial^2/ \partial \theta\partial \theta^\top)D_n(\theta_0, I_n)$ is invertible, 
   the following basic  expression is then  obtained,
   \begin{equation} \label{eq.theta-distr1}
   \sqrt{n}\big(\widehat{\theta}_n - \theta_0\big)  = \Big(  \frac{\displaystyle \partial^2}{\displaystyle \partial \theta\partial \theta^\top}D_n(\theta_0, I_n)\Big)^{-1}
   \sqrt{n}\int_{-\pi}^\pi g_{\theta_0}(\lambda)  \big( I_{n}(\lambda) - f(\lambda) \big) d\lambda + o_P(1).
   \end{equation} 
   Here  $g_{\theta_0}(\lambda) $ is a   $m$-dimensional vector of scores, the  $ j$th element of which is given by 
   \[ g_{j,\theta_0}(\lambda) = \frac{1}{2\pi f_{\theta_0} (\lambda)} \frac{\partial}{\partial \theta_j}\log f_{\theta_0}(\lambda)= - \frac{1}{2\pi} \frac{\partial}{\partial \theta_j}f^{-1}_{\theta_0}(\lambda),\]
   $j=1,2, \ldots, m$ and  $ f^{-1}_\theta = 1/f_\theta$.
  Equation  (\ref{eq.theta-distr1})  suggests that 
   the  distribution of  Whittle's estimator  $\widehat{\theta}_n$, can be well approximated  by  the product of the inverse of the $m\times m$  random  matrix $ W_n=\big( \partial^2/ \partial \theta_j\partial \theta_kD_n(\theta_0, I_n)\big)_{j,k=1,2, \ldots, m}$ with  the $m$-dimensional vector of  integrated periodograms 
   $    \big(\sqrt{n}\int_{-\pi}^\pi g_{j,\theta_0}(\lambda)  \big( I_{n}(\lambda) - f(\lambda) \big) d\lambda, j=1,2, \ldots, m\big)^\top$. 
   
  Based on expression (\ref{eq.theta-distr1}), asymptotic theory for Whittle estimators has been developed in the literature   under a variety of assumptions on the dependence structure of the underlying process.  In early papers Walker (1964) and Hannan (1973) derived asymptotic normality under linearity assumptions while Hosoya (1979) also allowed for long range dependence.   For  a  more  recent review of the related literature as well as  derivations of asymptotic normality based  on a physical dependence measure,   we refer to Shao (2010).  
    It is  typically shown, that,
   \begin{equation} \label{eq.theta-distr2}
      \sqrt{n}\big(\widehat{\theta}_n - \theta_0\big)   \stackrel{D}{\rightarrow} {\mathcal N}\big( 0, W^{-1}(V_1+V_2)W^{-1} \big),
   \end{equation}
   as $n\rightarrow\infty$, where  the $m\times m$ matrices $ W$, $V_1$ and $ V_2$ are      given by 
   \[ W=\Big( \frac{\partial^2}{\partial \theta_j\partial\theta_k} D(\theta_0,f)\Big)_{j,k=1,2, \ldots,m},\] 
   \[ V_1 = \Big( 4\pi\int_{-\pi}^\pi g_{j,\theta_0}(\lambda)g_{k,\theta_0}(\lambda) f^2(\lambda) d\lambda\Big)_{j,k=1,2, \ldots, m}\]
   and
   \[ V_2 =  \Big(2\pi \int_{-\pi}^\pi\int_{-\pi}^\pi   g_{j,\theta_0}(\lambda_1)g_{k,\theta_0}(\lambda_2) f_4(\lambda_1,\lambda_2, -\lambda_2) d\lambda_1d\lambda_2 \Big)_{j,k=1,2, \ldots, m}.\]

 Observe that 
   $ W$ and $ V_1$  only depend on the parametric and the true spectral densities,   that is on $ f_{\theta_0}$ and $f$,  while the matrix $ V_2$ depends on  $ f_{\theta_0}$ and on the fourth order cumulant spectral density  $ f_4$ of the underlying process $ \{X_t, t \in \Z\}$. The latter is  defined as 
 \[ f_4(\lambda_1,\lambda_2,\lambda_3) =\frac{1}{(2\pi)^3} \sum_{h_1,h_2,h_3 \in \Z}cum(X_0,X_{h_1}, X_{h_2}, X_{h_3}) \exp^{-i(h_1\lambda_1 + h_2\lambda_2 + h_3\lambda_3)},   \]
 where $ cum(X_0,X_{h_1}, X_{h_2}, X_{h_3})$ denotes  the fourth order cumulant of the process $ \{X_t; t \in \Z\}$,  cf. Rosenblatt (1985).   Notice  that  if $ f=f_{\theta_0}$, that is,  if the spectral density of the process belongs to the parametric family $ {\mathcal F}_\theta$ and  
  the model is correctly specified, then  $V_1 = 2W$  and  the covariance matrix of 
 the limiting Gaussian distribution (\ref{eq.theta-distr2})  is given by $ W^{-1}(2I_m +V_2W^{-1})$, where  $I_m$ denotes  the $m\times m $ unit matrix.  

The matrix  $V_2$ in general does not   entirely disappear even if the underlying process is linear, that is,  if $ X_t$ is generated as 
$ X_t=\sum_{j=-\infty}^\infty \psi_j \varepsilon_{t-j}$, where  $ \sum_{j=-\infty}^\infty |\psi_j|<\infty$ and  the $\varepsilon_t$'s are zero mean, i.i.d. innovations with variance $\sigma^2>0$.
However, in this case,  the matrix  $V_2$ simplifies considerably. To elaborate, recall that under  the assumption of linearity and  of validity of Kolmogorov's formulae, (see Blockwell and Davis (1991), Ch. 5.8),
$ \theta=(\sigma^2,\tau)$ where
$\tau\in \Re^{m-1}$  is free of the innovation variance $\sigma^2$. 
The dependence on the fourth order moment structure of Whittle estimators disappears then if one is solely interested in the distribution of the estimators of the part  $ \tau$ of the parameter vector.
This is due to the fact  that for linear processes, the spectral density  $ f_\theta(\cdot) $ factorizes as $ f_\theta(\cdot) =h_\tau(\cdot) \sigma^2/(2\pi)$, where the function $h_\tau(\cdot)$ depends  on  $\tau$ only. Then, and since for  the same class of processes,  
 $ f_4(\lambda_1,\lambda_2, -\lambda_2) = (2\pi)^{-1} \eta_4 f(\lambda_1)f(\lambda_2)$,
 with $ \eta_{4}=E(\varepsilon^4_1/\sigma^4 -3)$, the rescaled fourth order cumulants (kurtosis)  of the i.i.d. innovations,     we get that, 
 \begin{align*}
 2\pi \int_{-\pi}^\pi\int_{-\pi}^\pi  & g_{j,\tau_0}(\lambda_1)g_{k,\tau_0}(\lambda_2) f_4(\lambda_1,\lambda_2, -\lambda_2) d\lambda_1d\lambda_2 \\
 & =   \eta_{4} \int_{-\pi}^\pi g_{j,\tau_0}(\lambda_1)f(\lambda_1)d\lambda_1 \int_{-\pi}^\pi g_{k,\tau_0}(\lambda_2) f(\lambda_2) \lambda_2  =0.
 \end{align*}
The  last equality follows since   the score function implies  $ \int_{-\pi}^\pi g_{s,\tau_0}(\lambda)f(\lambda)d\lambda=0$ for every $ s=1,2, \ldots, m-1$, where 
 $g_{s,\tau_0}(\cdot)$ denotes the partial derivative of  $ -h^{-1}_{\tau}(\cdot)/(2\pi)$ with respect to the $s$-th variable of the $m-1$ dimensional vector $ \tau$,  evaluated at $\tau=\tau_0$.
 Thus the $(m-1)\times(m-1)$ submatrix   of $V_2$ which corresponds to the elements of the vector $\tau$ only, consists of zeros.   
  Notice  that  this  simplification  does not  hold true for   the components  of the matrix $V_2$ which are affected by  the estimator of  $\sigma^2$; see  
 Dahlhaus and Janas (1996) for more details.  


Now, a  close look at the derivations leading to the  covariance formulae $ W^{-1}(V_1+V_2)W^{-1} $ of the limiting Gaussian distribution (\ref{eq.theta-distr2}),   reveals  that the term $ V_2$ is solely due to the weak and asymptotically vanishing  covariance of the periodogram ordinates across frequencies.  
Recall the basic expression of the covariance of the periodogram for nonzero Fourier frequencies $ |\lambda_{j,n}| \neq |\lambda_{k,n}|$, 
\begin{equation} \label{eq.CovPer}
{\rm cov}( I_n(\lambda_{j,n}), I_n(\lambda_{k,n}) )=   \frac{1}{n} f_4(\lambda_{j,n},\lambda_{k,n}, -\lambda_{k,n})(1+o(1)) +  O(n^{-2}).
\end{equation}
 Summing up  these   covariances  over all frequencies $|\lambda_{j,n} | \neq |\lambda_{k,n}| $ in the set $ {\mathcal G}(n)$, leads to  a non-vanishing contribution  to the limiting distribution of Whittle estimators as  expressed  by the matrix $V_2$.  As already mentioned,   frequency domain bootstrap procedures  which 
 generate independent periodogram ordinates, like for instance the multiplicative perodogram bootstrap,  can  not imitate the weak dependence structure  (\ref{eq.CovPer}). Therefore, and besides  the special case $ f_4=0$, e.g. Gaussian time series,  such procedures   do not  appropriately  capture  the term  $ V_2 $ and  they fail in  consistently  estimating  the distribution of $ \sqrt{n}(\widehat{\theta}_n-\theta_0)$.
  
\section{The basic bootstrap procedure} \label{sec.3}
 
 Our goal is to develop a consistent bootstrap  estimator 
of  the distribution of $ L_n=\sqrt{n}(\widehat{\theta}_n-\theta_0)$ without imposing  restrictive structural assumptions on the process $ \{X_t,t\in\Z\}$ and allowing  at the same time for the case of model misspecification. Toward this goal,  we propose a     hybrid, frequency domain bootstrap procedure which  builds upon 
  the multiplicative periodogram bootstrap and appropriately extends  it,  in order to overcome its limitations  for the class of Whittle estimators.  The  procedure consists of two main 
  parts. The first part is based on the  multiplicative bootstrap 
 approach as proposed by Franke and H\"ardle (1992) and Dahlhaus and Janas (1996). It is used to  estimate all features of the distribution of $L_n$ including  the parts of the covariance matrix     which do not depend on the fourth order characteristics of the process $ \{X_t; t \in \Z\}$ and in particular the matrices $W$ and $V_1$.
 This is done in Step 2 and Step 3 of the following algorithm.   However,    and as already mentioned, 
 since  the   multiplicative periodogram bootstrap  generates independent pseudo periodogram ordinates, it can not  be used to imitate 
 the fourth order characteristics of the process which affect the distribution of $L_n$ and more specifically the  matrix $ V_2$. 
  The second part of our bootstrap procedure corrects for this shortcoming. This is achieved  by  generating pseudo periodograms  of subsamples of length $b$, $ b< n$, using   randomly selected  sets of appropriately defined frequency domain residuals.  The advantage of these pseudo periodograms  of subsamples  is that they retain  the weak dependence structure of the periodogram across  the Fourier frequencies corresponding to the subsamples. They  can, therefore, be used to consistently estimate the missing part $ V_2$.  This is done in Step 4 and Step 5 of the   bootstrap algorithm presented  bellow. Putting these two parts  together  in an appropriate way,  finally,  leads   in Step 6,  to a consistent estimator of the distribution of the random sequence $ \sqrt{n}(\widehat{\theta}_n-\theta_0)$ of interest.

  The following algorithm implements the above ideas  and is the basic  
     hybrid  frequency domain  bootstrap procedure proposed in this paper. 
 \vspace*{0.25cm}
\begin{enumerate}
\item[{\it Step 1:}]
\  Calculate Whittle's estimator $ \widehat{\theta}_n$.
\item[{\it Step 2:}]
\ Let $ \widehat{f}$ be a nonparametric  estimator of $ f$.  For $ j=1,2, \ldots, N$, generate  
$$ I^\ast_n(\lambda_{j,n})=\widehat{f}(\lambda_{j,n}) \cdot U^\ast_j,$$   where $ U_j^\ast$ are i.i.d. standard exponential distributed random variables. Set $  I^\ast_n(\lambda_{j,n})= I^\ast_n(-\lambda_{j,n})$ for $ j=-1,-2, \ldots, -N$. 
\item[{\it Step 3:}]
 \ Let   
\[ \widehat{\theta}^\ast_n = \arg \min_{\theta\in\Theta}\, D_n(\theta, I^\ast_n)  \  \mbox{and} \   \widehat{\theta}_0 = \arg \min_{\theta\in\Theta}\, D_n(\theta, \widehat{f}).\]
Define
\[ V_{1,n}^\ast = Var^\ast(M_n^\ast)  \ \  \mbox{and} \ \ W_n^\ast = \Big(\frac{\partial^2}{\partial \theta \partial \theta^\top}D_n(\theta, I^*_n) \Big|_{\theta=\widehat{\theta}_0} \Big),\]
where 
\[ M^\ast_n=\frac{2\pi}{\sqrt{n}}
   \sum_{j \in {\mathcal G}(n)} g_{\widehat{\theta}_0}(\lambda_{j,n})  \big( I_{n}^\ast(\lambda_{j,n}) - \widehat{f}(\lambda_{j,n}) \big),\]
   and  
 $g_{\widehat{\theta}_0}(\lambda)$ the $m$-dimensional vector
\[ g_{\widehat{\theta}_0}(\lambda) = \Big( g_{j,\widehat{\theta}_0}(\lambda) = -\frac{1}{2\pi}\frac{\partial}{\partial \theta_j} f^{-1}_\theta(\lambda)\Big|_{\theta=\widehat{\theta}_0}, \  j=1,2, \ldots, m\Big)^\top.\]
Calculate the pseudo random vector $Z_n^\ast$ defined by
\[ Z^\ast_n = \big(V^\ast_{1,n}\big)^{-1/2}  W^\ast_n \sqrt{n}(\widehat{\theta}^*_n -\widehat{\theta}_0).\]
\item[{\it Step 4:}]
Select an integer $ b <n$  and generate $k=[n/b]$  pseudo periodograms 
pseudo periodograms $   I_b^{(\ell)}(\lambda_{j,b})$, $ \ell=1,2, \ldots, k$, where  
\[ I_b^{(\ell)}(\lambda_{j,b})=\widehat{f}(\lambda_{j,b}) \cdot U^{(\ell)}_b(\lambda_{j,b}).\]
Here,    
$U^{(\ell)}_b(\lambda_{j,b})= I_b^{(\ell)}(\lambda_{j,b})/\widetilde{f}(\lambda_{j,b})$, where 
\[ \hspace*{1.5cm} I_b^{(\ell)}(\lambda_{j,b}) = \frac{1}{2\pi} \big|\sum_{s=1}^b X_{i_\ell +s-1} e^{-i s \lambda_{j,b}}\big|^2\]
and   $ i_\ell$, $ \ell =1,2, \ldots, k$,  are  i.i.d. random variables uniformly distributed on the set $ \{1,2, \ldots, n-b+1\}$.
Furthermore, 
\[   \widetilde{f}(\lambda_{j,b})=\frac{1}{n-b+1}\sum_{t=1}^{n-b+1} I_b^{(t)}(\lambda_{j,b}),\]
with $ I_b^{(t)}(\lambda)=(2\pi)^{-1}\big|\sum_{s=1}^{b}X_{t+s-1}e^{-it\lambda}\big|^2$.  
%
%
%
%
%
Calculate  the pseudo random variables $ M_n^+$ as  
\[ M^+_n = \sqrt{kb} \frac{1}{k} \sum_{l=1}^{k} \frac{2\pi}{b} \sum_{j\in {\mathcal G}(b)} g_{\widehat{\theta}_0}(\lambda_{j,b}) \big( I_b^{(\ell)}(\lambda_{j,b}) - \widehat{f}(\lambda_{j,b})\big).\]
\item[{\it Step 5:}] 
\ Calculate  the $m\times m$ matrix  $ V_{2,n}^+$ as
\[  V_{2,n}^{+} = \Sigma^+_n - C_n^+,\]
where 
$ \Sigma^+_n = {\rm Var}^\ast(M_n^+) $,    
%
$   C_n^+ = \big(c_n^{(r,s)}\big)_{r,s=1,2, \ldots, m}$  and 
the  elements $ c_n^{(r,s)} $ 
are given by  
\begin{align*}
 c_n^{(r,s)} &  = \frac{8\pi^2}{b} \sum_{j\in {\mathcal G}(b)} g_{r,\widehat{\theta}_0}(\lambda_{j,b})g_{s,\widehat{\theta}_0}(\lambda_{j,b})\widehat{f}(\lambda_{j,b})^2 \\
 & \ \ \ \ \  \times \Big(\frac{1}{n-b+1}\sum_{t=1}^{n-b+1} \frac{I_{b}^{(t)}(\lambda_{j,b})^2}{\widetilde{f}_b(\lambda_{j,b})^2} -1\Big).
 \end{align*} 
\item[{\it Step 6:}]
\ Appoximate the distribution of $ L_n=\sqrt{n}(\widehat{\theta}_n-\theta_0)$ by the distribution of 
\[   L_n^\ast = \big(W_n^\ast \big)^{-1} \big(V_{1,n}^\ast + V_{2,n}^+ \big)^{1/2} \cdot Z^\ast_n.\]
\end{enumerate}

\vspace*{0.25cm} 

Several  aspects of the above bootstrap algorithm are clarified  by the following series of remarks and comments. 

\begin{remark}{~}
 \begin{enumerate}
 \item[(i)] \  Observe that  in order  to appropriately  capture the effect of model  misspecification, i.e., the case $ f \notin {\mathcal F}_\theta$, 
 a nonparametric estimator $ \widehat{f}$  is  used  in {\it Step 2} to generate the pseudo periodogram ordinates $ I^\ast_n$ and 
 not the estimated parametric spectral density $ f_{\widehat{\theta}_n}$.  
 \item[(ii)] \ The estimator $ \widehat{\theta}_0$    in Step 3 is defined in a way which  imitates the properties  of $ \theta_0$. This estimator also 
 delivers  the appropriate centering of the bootstrap estimator $ \widehat{\theta}_n^\ast$, that is  
  $ \sqrt{n}(\widehat{\theta}_n^\ast-\widehat{\theta}_0)$  is used  as a bootstrap analogue of $ \sqrt{n}(\widehat{\theta}_n- \theta_0)$. 
 \item[(iii)] \  As the proof of Theorem~\ref{th.bootWhittle} shows,
 \[  \sqrt{n}(\widehat{\theta}^\ast_n-\widehat{\theta}_0) \stackrel{D}{\rightarrow} {\mathcal N}(0,W^{-1}V_{1}W^{-1}),\]
  in probability.  Furthermore, Lemma~\ref{le.Appendix2} of Section~\ref{sec.7}, shows that  $W_n^\ast \stackrel{P}{\rightarrow} W  $ and $V_{1,n}^\ast \stackrel{P}{\rightarrow}  V_1 $.  These facts imply that     the limiting distribution of the standardized pseudo random variable $ Z^\ast_n=   \big(V^\ast_{1,n}\big)^{-1/2}  W^\ast_n \sqrt{n}(\widehat{\theta}^*_n -\widehat{\theta}_0)$  appearing at the end of Step 3, has covariance matrix   the $m\times m$  identity  matrix. 
\end{enumerate} 
\end{remark}

\begin{remark}{~}
\begin{enumerate}
\item[(i)] \ In Step 4 the resampled periodograms of subsamples of length $b$,  i.e., $ I_b^{(\ell)}(\lambda_{j,b})$,  are obtained by using the same spectral density estimator $\widehat{f}$  as in Step 2 but evaluated  at the Fourier frequencies $\lambda_{j,b}$ corresponding to the length $b$ of the subsamples. Furthermore, in order to generate   $ I_b^{(\ell)}(\lambda_{j,b})$, $j=1,2, \ldots, B$, the estimated spectral density 
$\widehat{f}(\lambda_{j,b})$ 
 is multiplied with  the  entire  set of frequency domain  residuals, denoted by $ U_b^{(\ell)}(\lambda_{j,b})$, $j=1,2, \ldots, B$. Because these  residuals are obtained by rescaling the entire set of  periodogram ordinates of a subsample, they  retain for the Fourier frequencies $\lambda_{j,b}$ of the subsample, the weak dependence structure of the periodogram.   Notice that the  particular rescaling of these residuals applied,  ensures that  $ E^\ast(U^{(\ell)}_b(\lambda_{j,b}))=1$, that is $E^\ast(I^{(\ell)}_b(\lambda_{j,b})) =\widehat{f}(\lambda_{j,b})$. 
 \item[(ii)] \  As  Lemma~\ref{le.Appendix2} of Section~\ref{sec.7} shows,   
 $ V_{2,n}^+ \stackrel{P}{\rightarrow} V_2$. This  
  implies  that $ V_{1,n}^\ast + V_{2,n}^+ \stackrel{P}{\rightarrow} V_1+ V_2$, in probability. That is,   
the pseudo random variables $ M^+_n$ based on the convolved periodograms  generated in {\it Step 4} of the above algorithm, are solely used to estimate the part $ V_2$ of the covariance matrix of the distribution
of  $ L_n$.  As already mentioned, this part   can not be captured  by  the distribution of $ \sqrt{n}(\widehat{\theta}^*_n-\widehat{\theta}_0)$ generated in Step 3 due to the independence of the pseudo periodograms $ I^\ast_n(\lambda_{j,n})$ across the Fourier frequencies $\lambda_{j,n}$. 
\item[(iii)] \ The statistic $ \widetilde{f}$ appearing in Step 4 is itself a nonparametric estimator of the spectral density $f$ the properties of which have  been investigated in the literature; see  Dahlhaus (1985) and the references therein. However, we use in this step the estimator $ \widehat{f}$ for generating the pseudo periodograms $ I_b^{(\ell)} $  in order to ensure that the same spectral density estimator is used  here as in Step 2  of the bootstrap procedure.  
\end{enumerate}
\end{remark}

\begin{remark} To understand the motivation behind the displayed equation in Step 6, notice that 
as  the proof of Theorem 4.1 shows  and by  Lemma 7.2 of Section~\ref{sec.7}, we have that $ Z^\ast_n\stackrel{D}{\rightarrow} {\mathcal N}(0,I_m)$, in probability. Furthermore and by the same lemma, it holds true that   
\[ (W_n^\ast)^{-1}(V_{1,n}^\ast + V_{2,n}^+)^{1/2} \stackrel{P}{\rightarrow} W^{-1}(V_1 + V_2)^{1/2}.\]
These results  imply  that $ L_n^\ast$ defined in Step 6 satisfies $ L_n^\ast \stackrel{D}{\rightarrow} {\mathcal N}(0, W^{-1}(V_1 + V_2)W^{-1}),$ 
in probability, which coincides with  the   limiting distribution of  the random sequence $ L_n=\sqrt{n}(\widehat{\theta}_n-\theta_0)$.
\end{remark}

\begin{remark}
 The $m\times m$ matrix $W_n^\ast$ containing the second order partial derivatives may be difficult to calculate in some situations.  In this case an additional step  can be included in the above procedure the aim  of which will be to directly estimate $W_n^\ast$. To elaborate,  
 let $  \Sigma^\ast_n={\rm Var}^\ast( \sqrt{n}\big(\widehat{\theta}_n^\ast -\widehat\theta_0)\big) $ and recall  the definition of 
  $V_{1,n}^\ast$ in Step 3.  
The matrix  $W^\ast_n$   can then be  estimated by 
 \begin{equation} \label{eq.Wstar}
  \widehat{W}_n^\ast =  V_{1,n}^\ast \big(\Sigma_n^\ast \cdot V_{1,n}^\ast \big)^{-1/2}.
  \end{equation}
By the property   $ \Sigma^\ast_n \stackrel{P}{\rightarrow} W^{-1}V_1 W^{-1} $,  we have by  Lemma~\ref{le.Appendix2} of Section~\ref{sec.7}, that 
\[  V^\ast_{1,n}  \big(\Sigma_n^\ast\cdot  V_{1,n}^\ast \big)^{-1/2} \stackrel{P}{\rightarrow} V_1\big(W^{-1}V_1 W^{-1}V_1\big)^{-1/2} = V_1\big(W^{-1} V_1\big)^{-1} = W.\]
That is, $\widehat{W}^\ast_n$ given in (\ref{eq.Wstar}) consistently  estimates  $W$ and can, therefore,  be used to replace $ W_n^\ast$  in the bootstrap algoritm. 
\end{remark}

\section{Bootstrap Validity}
\label{se.bootvalid}

In this section we establish the asymptotic validity of the bootstrap procedure  proposed. 
Toward this goal    we need to   impose some conditions on the dependence structure of the  process  $ \{X_t,t\in \Z\}$ as well as on the smoothness properties of the functions  and of the  spectral densities involved. These conditions are  summarized in the following  assumptions. 

{\bf Assumption 1:} 
\begin{enumerate} 
\item[(i)] \  The process $ \{X_t, t \in \Z\}$  has mean zero, is eight-order stationary, i.e., the joint cumulants  up to eighth-order,
 $ {\rm cum}(X_t, X_{t+h_1}, \ldots, X_{t+h_7})$, do not depend on $t$ for any $ h_1, h_2, \ldots, h_7 \in {\mathbb Z}$. Furthermore, 
 $  \sum_{h\in \Z}|h||{\rm cum}(X_0,X_h)| <\infty$,  $ \inf_{\lambda\in[0,\pi]}f(\lambda) >0$, 
 \[ \sum_{h_1,h_2,h_3\in \Z}(|h_1|+|h_2|+|h_3|)|{\rm cum}(X_0,X_{h_1},X_{h_2}, X_{h_3})| <\infty,\]
 and 
 \[  \sum_{h_1,\ldots,h_7\in \Z}|{\rm cum}(X_0,X_{h_1},\ldots, X_{h_7})| <\infty.\] 
\item[(ii)] \  
The sequence of Whittle estimators $\{\sqrt{n}( \widehat{\theta}_n-\theta_0), n\in \N\}$, satisfies  (\ref{eq.theta-distr2}).
\end{enumerate}

The above assumptions on the dependence structure of the process $ \{X_t,t\in\Z\}$ are rather weak and cover a wide range of processes considered  in the literature; see among others, Rosenblatt (1985), Doukhan and Le\'on (1989), Wu and Shao (2004) for summability properties of cumulants for processes satisfying different weak dependent conditions. 
The requirement of eighth-order stationarity seems inavoidable taking into account the fact that our derivations include calculations of the variance of time averaged products of periodograms   of subsamples at different frequencies.   Notice that beyond the summability requirements  of  Assumption 1 and in order to be as flexible  as possible, we do not impose any  further conditions on the moment or on the dependence structure of the underlying  process. Instead, we  directly require that  the asymptotic normality of  the sequence $\{ \sqrt{n}(\widehat{\theta}_n-\theta_0), n \in \N\}$,  as   stated in Assumption 1(ii),  holds true.   This also covers a wide range of processes satisfying a variety of weak dependence conditions; see the  discussion before equation (\ref{eq.theta-distr2}) in Section 2.

{\bf Assumption 2:} 
\begin{enumerate}
\item[(i)] \  
\[{\mathcal F}_\theta = \{ f_\theta, \theta \in \Theta, \inf _{\theta\in\Theta}\inf_{\lambda\in [-\pi,\pi]}f_\theta(\lambda)\geq  \delta >0\},\]
where $\Theta$ is  a compact subset of $ \Re^m$ and $ \theta_0 $ defined in (\ref{eq.theta0}) is unique and belongs to the interior of $ \Theta$. 
\item[(ii)] \ $D(\theta,f)$ is twice differentiable in $\theta \in \Theta$ under the integral sign. 
\item[(iii)] \  $f_\theta(\lambda)$ is continuous at any  $ (\lambda,\theta) \in [-\pi,\pi]\times \Theta$.
\item[(iv)] \ The first and second order partial derivatives of  $ f^{-1}_\theta(\cdot) = 1/f_\theta(\cdot)$ with respect to $\theta$  are continuous at any $(\lambda,\theta) \in  [-\pi,\pi]\times \Theta$.
\item[(v)] \ The $m\times m$  matrix $ W = \Big(\frac{\displaystyle \partial^2}{\displaystyle \partial \theta_j \partial \theta_k} D(\theta_0,f)\Big)_{j,k=1,\ldots,m} $  is non singular.
\end{enumerate} 

Assumption 2 specifies the  
conditions imposed on the family $ {\mathcal F}_\theta$ of parametric spectral densities considered. Assumption 2(i) requires that  the spectral densities  $ f_\theta \in {\mathcal F}_\theta$  
are   bounded from bellow  away from zero for all frequencies $\lambda \in [-\pi,\pi]$ and for all $ \theta \in \Theta$.
This part of the assumption as well as the  smoothness properties of $ f_\theta$  imposed in  part (ii) to  part (v),   are standard and common in the literature; see Taniguchi (1987) and Dahlhaus and Wefelmeyer (1996).   

Our next assumption specifies the required consistency properties of the nonparametric spectral density estimator
$ \widehat{f}$  used to generate the pseudo periodograms  $I^\ast_{n}$ and $ I_b^{(\ell)}$. It is a standard requirement of uniform consistency.

{\bf Assumption 3:}   \ The nonparametric spectral density estimator $ \widehat{f}$  satisfies
\[ \sup_{\lambda \in [-\pi,\pi]}\big| \widehat{f}(\lambda) - f(\lambda) \big|\stackrel{P}{\rightarrow} 0.\]

Our  last assumption   concerns  the rate at which the subsampling size $b$, involved in the generation of the convolved periodograms of subsamples,  is allowed  to increase to infinity with the sample size $n$  in order to ensure consistency of the estimator $ \widehat{V}_{2,n}^+$ used. 

{\bf Assumption 4:} \   $ b\rightarrow \infty$ as $ n \rightarrow \infty$ such that $ b^3/n \rightarrow 0$.

We  now state the main result of this paper which establishes consistency  of the bootstrap proposal  $ L_n^\ast$  defined in Step 6 of the basic bootstrap algorithm and used to estimate  the distribution of $ L_n$.
 
\begin{theorem} \label{th.bootWhittle}
Let Assumptions 1 to 4 be satisfied. 
Then,  as $n\rightarrow \infty$, 
\[ \sup_{x\in {\mathbb R}^m} \Big| P(L_n^\ast  \leq x\big|X_1,X_2, \ldots, X_n)  - P(L_n\leq x) \Big| \rightarrow 0,\]
 in probability, where $P(L_n^\ast  \leq \cdot \big|X_1,X_2, \ldots, X_n)$ denotes the distribution function of the  random variable $ L^\ast_n$ given the time series $ X_1, X_2, \ldots, X_n$.  
\end{theorem}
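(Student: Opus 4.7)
The plan is to mimic, in the bootstrap world, the linearization argument leading to (\ref{eq.theta-distr1}) and then to combine a conditional CLT for a sum of independent pseudo periodograms with consistency of the three covariance-like quantities $W_n^\ast$, $V_{1,n}^\ast$, and $V_{2,n}^+$. Since the target distribution is the Gaussian law $\mathcal{N}(0, W^{-1}(V_1+V_2)W^{-1})$ from (\ref{eq.theta-distr2}) (which is the limit of $L_n$ by Assumption~1(ii)), uniform convergence of distribution functions follows from weak convergence of $L_n^\ast$ to the same limit by a Pólya-type argument, using continuity of the Gaussian c.d.f.\ and a standard subsequence characterization of convergence in probability.

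First I would tackle the conditional CLT. Conditionally on $X_1,\ldots,X_n$, the ordinates $I_n^\ast(\lambda_{j,n})=\widehat f(\lambda_{j,n})U_j^\ast$ are independent across $j\in\{1,\dots,N\}$ with $U_j^\ast$ standard exponential, so $M_n^\ast$ is a normalized sum of independent zero-mean random vectors. Lindeberg--Feller applied coordinatewise, together with continuity of $g_{\widehat\theta_0}$ and $\widehat f$ and Assumption~2, gives $(V_{1,n}^\ast)^{-1/2} M_n^\ast \stackrel{D}{\to}\mathcal{N}(0,I_m)$ in probability. A Taylor expansion of $(\partial/\partial\theta)D_n(\theta,I_n^\ast)$ at $\widehat\theta_0$, handled exactly as in the derivation of (\ref{eq.theta-distr1}) and using Assumption~2(v) to invert the Hessian, yields the bootstrap analogue
\[ \sqrt{n}\bigl(\widehat\theta_n^\ast-\widehat\theta_0\bigr) \;=\; (W_n^\ast)^{-1}M_n^\ast + o_{P^\ast}(1),\]
so that $Z_n^\ast=(V_{1,n}^\ast)^{-1/2}W_n^\ast\sqrt n(\widehat\theta_n^\ast-\widehat\theta_0)\stackrel{D}{\to}\mathcal{N}(0,I_m)$ in probability.

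Next I would verify the three consistency statements $W_n^\ast\stackrel{P}{\to}W$, $V_{1,n}^\ast\stackrel{P}{\to}V_1$, and $V_{2,n}^+\stackrel{P}{\to}V_2$, which are collected in Lemma~\ref{le.Appendix2}. The first two are obtained by recognizing $W_n^\ast$ and $V_{1,n}^\ast$ as Riemann sums against $\widehat f$ (and its products with $g_{\widehat\theta_0}$), then invoking Assumption~3 and the smoothness of $f_\theta^{-1}$ from Assumption~2(iii)--(iv). The convergence $V_{2,n}^+\stackrel{P}{\to}V_2$ is the technical heart: one expands $\Sigma_n^+=\operatorname{Var}^\ast(M_n^+)$ as a sum of a within-subsample component and a cross-subsample component; the within-subsample component reproduces, in expectation, the integral of $g_{\widehat\theta_0}\otimes g_{\widehat\theta_0}$ against $f_4(\lambda_1,\lambda_2,-\lambda_2)$ plus an inflation term of the form $(8\pi^2/b)\sum g_{r,\widehat\theta_0}g_{s,\widehat\theta_0}\widehat f^2\cdot(\cdot)$, and the deterministic correction $C_n^+$ is designed to subtract exactly this inflation term. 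Combining Slutsky's lemma with the continuous mapping theorem (for the inversion and the matrix square root, valid with probability tending to one by non-singularity of $W$ and positivity of $V_1+V_2$) then gives $L_n^\ast\stackrel{D}{\to}\mathcal{N}(0,W^{-1}(V_1+V_2)W^{-1})$ in probability, matching the law of $L_n$.

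The main obstacle will be the proof that $V_{2,n}^+$ is consistent for $V_2$. This requires a careful second-moment calculation for the convolved subsample periodograms: one must control cross-covariances of $I_b^{(\ell)}(\lambda_{j,b})$ across different subsample indices $\ell$ and different frequencies $\lambda_{j,b}$, which is where eighth-order stationarity and the cumulant summability in Assumption~1(i) are used to bound the variance contributions of products like $I_b^{(t)}(\lambda_{j,b})^2/\widetilde f_b(\lambda_{j,b})^2$. The bias from using blocks of length $b$ to approximate the fourth-order cumulant spectrum is of order $O(b^{-1})$ in each summand, so its aggregated contribution is controlled only through the condition $b^3/n\to 0$ of Assumption~4, which also ensures negligibility of the $O(n^{-2})$ remainder in (\ref{eq.CovPer}) after summation over the $[n/b]$ subsamples and $O(b)$ Fourier frequencies. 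Once these bounds are in hand, the rest of the argument is routine linearization plus Slutsky.
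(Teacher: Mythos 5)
Your proposal takes essentially the same route as the paper. Both proofs proceed by (a) a Taylor expansion of the bootstrap score around $\widehat\theta_0$ to reduce matters to the linear statistic $-\sqrt{n}\bigl(D_n^{(1)}(\widehat\theta_0,I_n^\ast)-D_n^{(1)}(\widehat\theta_0,\widehat f)\bigr)$, (b) a conditional CLT for that weighted sum of i.i.d.\ exponentials (you cite Lindeberg--Feller, the paper checks Lyapunov's condition; these are interchangeable here), (c) the three consistency statements $W_n^\ast\to W$, $V_{1,n}^\ast\to V_1$, $V_{2,n}^+\to V_2$ from Lemma~\ref{le.Appendix2}, and (d) Slutsky plus a P\'olya argument to upgrade weak convergence to uniform convergence of distribution functions. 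Your description of the bias-subtraction role of $C_n^+$, and of where the eighth-order cumulant summability and the $b^3/n\to0$ condition enter, matches the way the paper handles Lemma~\ref{le.Appendix2}(iii) via the second-moment computation for convolved subsample periodograms.

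The one nontrivial step you gloss over is the conditional consistency $\|\widehat\theta_n^\ast-\widehat\theta_0\|\stackrel{P}{\to}0$. You write that the remainder in the Taylor expansion is ``handled exactly as in the derivation of (\ref{eq.theta-distr1}),'' but making the remainder $R_n(\widehat\theta_n^\ast)$ vanish requires first knowing that the bootstrap minimizer $\widehat\theta_n^\ast$ concentrates near $\widehat\theta_0$. In the paper this is Lemma~\ref{le.Appendix1}(ii), whose proof is not a one-line continuity argument: it invokes Lahiri's matrix-perturbation lemma to get a uniform bound on $\bigl(D_n^{(2)}(\widehat\theta_0,I_n^\ast)\bigr)^{-1}$, constructs the contraction-type map $w$ on a shrinking ball of radius $C\log(n)/\sqrt n$, and applies Brouwer's fixed-point theorem to locate $\widehat\theta_n^\ast$ inside that ball with probability tending to one. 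Without some argument of this kind the claim $R_n(\widehat\theta_n^\ast)=o_{P^\ast}(1)$ is unsupported, because the uniform-continuity bound on the second derivatives only bites once you know $\theta_n^+$ lies close to $\widehat\theta_0$. If you add that localization step (or cite it as a lemma), your outline is complete and matches the paper's argument.
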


%
 \section{Incorporating Whittle Likelihood Modifications} \label{sec.4}

It has been observed that despite their nice properties, Whittle estimators may  behave,  in certain,  small samples  situations, inferior compared to  the exact,  time domain, 
 maximum likelihood estimators.  More specifically, Whittle  estimators may  be biased  in small samples 
 due  to   errors  inherited in Whittle's frequency domain approximation of the time domain  Gaussian maximum likelihood  or   in cases where 
  the spectral density  of the underlying process contains (strong) peaks or  the periodogram suffers from the well known blurring or aliasing effects. These drawbacks motivated many researchers to investigate   modifications 
 of the basic Whittle likelihood   in order to improve the  finite sample performance of the estimators.
   Dahlhaus (1988) proposed and investigated 
   the use of tapered periodograms,  while  Velasco and Robinson (2000) combined tapering  with differencing the time series before obtaining Whittle's estimators.  Sykulski et al. (2019) introduced  a de-biased Whittle likelihood to reduce leakage and blurring effects and more recently,  Subba  Rao and Yang (2020) proposed  the boundary corrected Whittle likelihood and also  combined this with tapering.  
 In this section we will propose modifications, respectively extensions, of   the basic  bootstrap algorithm presented  in Section 3,  which appropriately take into account  such  modifications of  Whittle estimators. 

\subsection{Tapered Periodograms}
Applying a data  taper to the time series observed, leads to   the replacement of  the periodogram $ I_n(\lambda_{j,n})$ used in Whittle's likelihood approximation (\ref{eq.whittle-lk-disc})  by a  tapered periodogram, denoted  by $ I_{n,T}(\lambda_{j,n})$.
 Let   $ h_{t,n} = h(t/n)$ be  a data taper, that is  $ h : \Re\rightarrow [0,1]$ is  a function of bounded variation satisfying  $h(x)=0$ for $ x \notin (0,1]$. For any frequency $ \lambda_{j,n} \in {\mathcal F}_n$, the tapered periodogram is then defined as 
 $$ I_{n,T}(\lambda_{j,n}) = \frac{1}{ 2\pi H_{2,n}}J_{n,T}(\lambda_{j,n}) \cdot J_{n,T}(-\lambda_{j,n}),$$ 
 where 
 $ J_{n,T}(\lambda_{j,n}) = \sum_{t=1}^n h_{t,n}  X_t \exp\{-i\lambda_{j,n} t\}$ is the  finite Fourier transform of the tapered  time series.  Here and   for  any  $k \in \N$, $ H_{k,n} = \sum_{t=1}^n h_{t,n}^k$. 
 
To  incorporate  data tapering in  the bootstrap procedure presented in Section 3,  the pseudo periodograms $ I_{n}^\ast$ and $ I_{b}^{(\ell)}$  have to be replaced by tapered versions,  denoted  by $ I_{n,T}^\ast $ and $  I_{b,T}^{(\ell)}$, respectively.  
 
 The  tapered version $  I_{b,T}^{(\ell)}$ of the periodogram $ I_{b}^{(\ell)}$, $\ell =1,2, \ldots, k$,   based on subsamples of length $b$ and used in Step 4 and Step 5,  can be generated     as
  $$ I_{b,T}^{(\ell)}(\lambda_{j,b}) =\widehat{f}(\lambda_{j,b})\cdot   U_{b,T}^{(\ell)}(\lambda_{j,b}),$$
  where $ U_{b,T}^{(t)}(\lambda_{j,b})= I_{b,T}^{(t)}(\lambda_{j,b}) /\widetilde{f}_T(\lambda_{j,b})$ for $ t=1,2, \ldots, n-b+1$. In the last  expression,  $  \widetilde{f}_T(\lambda_{j,b}) = \sum_{t=1}^{n-b+1} I^{(t)}_{b,T}(\lambda_{j,b})/(n-b+1)$  and 
 $ I_{b,T}^{(t)}(\lambda_{j,b}) = J_{b,T}^{(t)}(\lambda_{j,b}) \cdot J_{b,T}^{(t)}(-\lambda_{j,b}) /(2\pi H_{2,b})$ with $J_{b,T}^{(t)}(\lambda_{j,b})$ the finite Fourier transform of the tapered subsample $ X_{t}, X_{t+1}, \ldots, X_{t+b-1}$, that is,  
 \[  J_{b,T}^{(t)}(\lambda_{j,b}) = \sum_{s=1}^b h_{s,b}  X_{t+s-1} e^{-i\lambda_{j,b} s}.\]

 The generation of the  tapered pseudo periodogram $ I_{n,T}^\ast$   used to replace $I^\ast_n$ in Step 2 and Step 3 is more involved. For this purpose, the following procedure  consisting of Step 2a) to Step 2c)  can be  used   to generate the tapered pseudo periodograms $ I_{n,T}^\ast$  and  to  replace  Step 2 of the basic bootstrap algorithm presented in Section 3 .
  \vspace*{0.25cm}
\begin{enumerate}
\item[{\it Step 2a:}] 
\  Generate $ \varepsilon_1^\ast, \varepsilon_2^\ast, \ldots, \varepsilon_n^\ast$ i.i.d., $ {\mathcal N}(0,1)$ distributed random variables and calculate 
for $ \lambda_{s,n}\in{\mathcal F}_n$, the normalized  finite Fourier transform,  
\[  Z^\ast_{s,n}= \frac{1}{\sqrt{2\pi n}} \sum_{t=1}^n \varepsilon_t^\ast e^{-i t \lambda_{s,n}}.\]
\item[{\it Step 2b:}] \ For $t=1,2, \ldots, n$, calculate  the pseudo random variables
\[ X_t^\ast = \sqrt{\frac{2\pi}{n}} \sum_{\lambda_{s,n}\in {\mathcal F}_n}\widehat{f}^{1/2}(\lambda_{s,n}) Z^\ast_{s,n} e^{i t \lambda_{s,n}}.\]
\item[{\it Step 2c:}] \   For $ \lambda_{j,n}\in {\mathcal F}_n$, calculate  the finite Fourier transform of the tapered  pseudo time series $ X_1^\ast, X^\ast_2, \ldots, X_n^\ast$, that is,
\[ J_{n,T}^\ast(\lambda_{j,n}) = \sum_{t=1}^{n} h_{t,n} X_t^\ast e^{-it \lambda_{j,n}}.\]
The tapered pseudo periodogram $ I^\ast_{n,T}(\lambda_{j,n})$  is then defined as 
\[ I^\ast_{n,T} (\lambda_{j,n}) = \frac{1}{2\pi H_{2,n}} J_{n,T}^\ast(\lambda_{j,n})\cdot \overline{J}_{n,T}^\ast(\lambda_{j,n}).\]
\end{enumerate}

\vspace*{0.25cm} 

\begin{remark}
Notice  that  if  we set   $ h_{t,n} \equiv 1$ for $ t=1,2, \ldots, n$, in  Step 2c) of the above algorithm,  which  corresponds to  the case of no taper, then we get    
\[  I^\ast_{n,T} (\lambda_{j,n}) = \widehat{f}(\lambda_{j,n}) |Z^\ast_{j,n}|^2.\]
The random variables $ |Z_{j,n}^\ast|^2$, $j=1,2, \ldots, N$,   are independent, which implies that  the periodogram ordinates   $ I^\ast_{n,T} (\lambda_{j,n})$ are independent across the  frequencies $\lambda_{j,n}$, $ j=1,2 ,\ldots, N$.  Furthermore and  since the $ |Z_{j,n}^\ast|^2 $ 
also have   a  standard  exponential distribution  for every   $ j $, we get that if $h_{t,n}\equiv 1$ for all $ t =1,2, \ldots, n$,   then the  
pseudo  periodograms $ I^\ast_{n,T} (\lambda_{j,n})$ generated following  Step 2a) to Step 2c)  have exactly the same  properties as the (non tapered) pseudo periodograms 
$ I_n^\ast(\lambda_{j,n})$  generated in Step 2 of the bootstrap algorithm 
presented in Section 3.  
\end{remark}

Now, the   tapered pseudo periodograms $I_{b,T}^{(\ell)} $ and $ I_{n,T}^\ast$  can be used  in the bootstrap algorithm to approximate the distribution 
of $ \sqrt{n}(\widehat{\theta}_{n,T}-\theta_{0})$, where $ \widehat{\theta}_{n,T}=\arg\min_{\theta\in \Theta}D_n(\theta, I_{n,T})$ and  $ D_n(\cdot) $ the function  given in (\ref{eq.whittle-lk-disc}). To elaborate, $ \widehat{\theta}_n$ appearing in Step 1 and elsewhere in this algorithm is replaced by the tapered estimator  $ \widehat{\theta}_{n,T}$. 
In Step 2 the bootstrap periodogram $ I^\ast_n$ is replaced by $ I^\ast_{n,T}$ and consequently $ \widehat{\theta}_n^\ast$ in Step 3  by 
\[ \widehat{\theta}_{n,T}^\ast = \arg\min_{\theta \in \Theta} D_n(\theta, I^\ast_{n,T}).\]
Thus the bootstrap approximation of $  \sqrt{n}(\widehat{\theta}_{n,T}-\theta_{0})$  in the same step is  given by  
$ \widetilde{L}_{n}^\ast = \sqrt{n}(\widehat{\theta}^\ast_{n,T}-\widehat{\theta}_0)$. Furthermore,   the  tapered pseudo periodogram $ I^\ast_{n,T}$ is used in the expression   of the  vector $ M_n^\ast$ in Step 3, while  the matrix 
$ W^\ast_n$   in the same step  is  calculated using  $ D_n(\theta, I^\ast_{n,T})$, that is  $ W_n^\ast$  is replaced by 
$$W^\ast_{n,T}=  \Big(\frac{\partial^2}{\partial \theta \partial \theta^\top}D_n(\theta, I^*_{n,T}) \Big|_{\theta=\widehat{\theta}_0} \Big).$$ 
Finally, in Step 4, the periodogram of the random subsamples $ I_b^{(\ell)}$   in the expression for $ M_n^+$ 
as well as  the periodogram $ I_b^{(t)}$ in the same step and  in Step 5, are  replaced by their  tapered versions $ I_{ b,T}^{(\ell)}$ and $ I_{b,T}^{(t)} $, respectively.  

\subsection{Debiased Whittle Likelihood}

Debiasing the Whittle likelihood  has been proposed   by Sykulski et al. (2019). The basic idea is to replace the parametric spectral density $f_\theta$  appearing in $D_n(\theta,I_n )$ by a smoothed  version  which equals   the expectation of the periodogram $I_n(\lambda)$ under the assumption that  $ f=f_\theta$. More specifically,  the objective function  considered by this modification  of Whittle's  approximation of the quasi Gaussian likelihood is given by 
\begin{equation} \label{eq. WL-debiased}
D_n^{(db)}(\theta, I_n) =   \frac{1}{n}\sum_{j\in {\mathcal G}(n)}\Big\{\log \overline{f}_\theta (\lambda_{j,n}) + \frac{\displaystyle I_n(\lambda_{j,n})}{\displaystyle \overline{f}_\theta(\lambda_{j,n})}\Big\},
\end{equation}
where 
\[ \overline{f}_\theta (\lambda_{j,n}) = \int_{-\pi}^\pi K_n(w-\lambda_{j,n})f_\theta(\omega)d\omega \]
and $ K_n(\cdot)$ is the Fejer-kernel, 
$$ K_n(x) = {\bf 1}_{\{x=0\}} n/2\pi +  {\bf 1}_{\{x\neq 0\}} \sin^2(nx/2)\big/(2\pi n \sin^2(x/2)\big).$$
Notice that  if $ f=f_\theta$,  then $ {\rm E}(I_{n}(\lambda_{j,n})) = \overline{f}_\theta(\lambda_{j,n})$; see  for instance, Rosenblatt (1963).  That is,  in this case, the periodogram $ I_n$ apearing in $ D^{(db)}(\theta,I_n)$   is an unbiased estimator of $ \overline{f}_\theta$,  which justifies the name given to this  modification. 

In 
the following steps we summarize the modifications needed in order to adapt the basic bootstrap algorithm presented in Section 3 to imitate  the random  properties   of the estimator $ \widetilde{\theta}_n$.

 \vspace*{0.25cm}
\begin{enumerate}
\item[] \noindent{\it Step I:}
\  Calculate $ \widetilde{\theta}_n= \arg \min_{\theta\in\Theta}D_n^{(db)}(\theta, I_n)$.
\item[] \noindent{\it Step II:} \ The same as Step 2 of the bootstrap algorithm in Section 3.
\item[] \noindent{\it Step III:} \ Calculate  
$ \widetilde{\theta}^\ast_n = \arg \min_{\theta\in\Theta}\, D^{(db)}_n(\theta, I^\ast_n)$, \ \   $ \widetilde{\theta}_0 = \arg \min_{\theta\in\Theta}\, D^{(db)}_n(\theta, \widehat{f})$ 
and  $ \widetilde{V}_{1,n}^\ast = Var^\ast(\widetilde{M}_n^\ast) $, 
where 
\[ \widetilde{M}^\ast_n=\frac{2\pi}{\sqrt{n}}
   \sum_{j \in {\mathcal G}(n)} \overline{g}_{\widetilde{\theta}_0}(\lambda_{j,n})  \big( I_{n}^\ast(\lambda_{j,n}) - \widehat{f}(\lambda_{j,n}) \big)\]
   and  
 $\overline{g}_{\widetilde{\theta}_0}(\lambda)$ is the $m$-dimensional vector
\[ \overline{g}_{\widehat{\theta}_0}(\lambda) = \Big( \overline{g}_{j,\widehat{\theta}_0}(\lambda) = -\frac{1}{2\pi}\frac{\partial}{\partial \theta_j} \overline{f}^{-1}_\theta(\lambda)\Big|_{\theta=\widetilde{\theta}_0}, \  j=1,2, \ldots, m\Big)^\top.\]
Calculate
\[ \widetilde{W}_n^\ast = \Big(\frac{\partial^2}{\partial \theta \partial \theta^\top}D^{(db)}_n(\theta, I^*_n) \Big|_{\theta=\widetilde{\theta}_0} \Big)\]
and the  pseudo random vector $\widetilde{Z}_n^\ast$ defined by
\[ \widetilde{Z}^\ast_n = \big(\widetilde{V}^\ast_{1,n}\big)^{-1/2}  \widetilde{W}^\ast_n \sqrt{n}(\widetilde{\theta}^*_n -\widetilde{\theta}_0).\]
\item[] \noindent{\it Step IV:} \ The same as Step 4 of the bootstrap algorithm in Section 3 but  by replacing $ M_n^+$ by 
\[\widetilde{M}^+_n = \sqrt{kb} \frac{1}{k} \sum_{l=1}^{k} \frac{2\pi}{b} \sum_{j\in {\mathcal G}(b)} \overline{g}_{\widetilde{\theta}_0}(\lambda_{j,b}) \big( I_b^{(\ell)}(\lambda_{j,b}) - \widehat{f}(\lambda_{j,b})\big).\]
\item[] \noindent {\it Step V:}
\ Calculate  the $m\times m$ matrix   $ \widetilde{V}_{2,n}^{+} = \widetilde{\Sigma}^+_n - \widetilde{C}_n^+$, 
where 
$\widetilde{\Sigma}^+_n = {\rm Var}^\ast(\widetilde{M}_n^+) $,    
$   \widetilde{C}_n^+ = \big(\widetilde{c}_n^{(r,s)}\big)_{r,s=1,2, \ldots, m}$  and 
the  elements $ \widetilde{c}_n^{(r,s)} $ 
given by  
\begin{align*}
 \widetilde{c}_n^{(r,s)} &  = \frac{8\pi^2}{b} \sum_{j\in {\mathcal G}(b)} \overline{g}_{r,\widetilde{\theta}_0}(\lambda_{j,b})\overline{g}_{s,\widetilde{\theta}_0}(\lambda_{j,b})\widehat{f}(\lambda_{j,b})^2 \\
 & \ \ \ \ \  \times \Big(\frac{1}{n-b+1}\sum_{t=1}^{n-b+1} \frac{I_{b}^{(t)}(\lambda_{j,b})^2}{\widetilde{f}_b(\lambda_{j,b})^2} -1\Big).
 \end{align*} 
\item[] \noindent{\it Step VI:}
\ Appoximate the distribution of $ L_n=\sqrt{n}(\widetilde{\theta}_n-\theta_0)$ by the distribution of 
\[   \widetilde{L}_n^\ast = \big(\widetilde{W}_n^\ast \big)^{-1} \big(\widetilde{V}_{1,n}^\ast +\widetilde{V}_{2,n}^+ \big)^{1/2} \cdot \widetilde{Z}^\ast_n.\]
\end{enumerate}


\subsection{Boundary Corrected Whittle Likelihood}

In order to reduce the  bias of Whittle estimators  caused by  boundary effects, 
Subba Rao and Yang (2021) proposed the so called, boundary corrected Whittle likelihood. The objective function to be minimized in this case  is given by 
\begin{equation} \label{eq. WL-BoundCorr}
D_n^{(bc)}(\theta, \widetilde{I}_n) =   \frac{1}{n}\sum_{j\in {\mathcal G}(n)}\Big\{\log  f_\theta (\lambda_{j,n}) + \frac{\displaystyle \widetilde{I}_n(\lambda_{j,n})}{\displaystyle f_\theta(\lambda_{j,n})}\Big\},
\end{equation}
where  $ \widetilde{I}_n (\lambda_{j,n})= (2\pi n)^{-1}\widetilde{J}_n(\lambda_{j,n}) \overline{J}_n(\lambda_{j,n})$. Here,  $ J_n(\lambda_{j,n})=\sum_{t=1}^n X_t e^{-i \lambda_{j,n} t}$,  is the finite Fourier transform of the time series $ X_1, X_2, \ldots, X_n$  while $ \widetilde{J}_n(\lambda_{j,n})$ is  the boundary extended finite Fourier transform given by  $ \widetilde{J}_n(\lambda_{j,n}) = J_n(\lambda_{j,n}) + \widehat{J}_n(\lambda_{j,n})$.  The  boundary extended finite Fourier transform  $ \widetilde{J}_n(\lambda_{j,n})$   is obtained by calculating the finite Fourier transform of the out of sample extended time series 
\begin{equation} \label{eq.ExtTs}
 \ldots , \widehat{X}_{-1}, \widehat{X}_0, X_1, X_2, \ldots, X_n, \widehat{X}_{n+1}, \widehat{X}_{n+2}, \ldots ,
\end{equation}
 where the  pseudo observations $ \widehat{X}_t$ are the  best  linear predictors of the corresponding (not observed) values  $X_t$ based on an  AR(p) model.  To elaborate, let 
 $(\widehat{\phi}_{s,p}, s=1,2, \ldots, p)^\top$ be  the vector of Yule-Walker estimators obtained by fitting an AR(p) model to the time series $ X_1 X_2, \dots, X_n$. Then
 $ \widehat{X}_{n+s} =\sum_{j=1}^p \widehat{\phi}_{j,p} \widetilde{X}_{n+s-j}$ with $ \widetilde{X}_t=X_t$ if $ t \in \{1,2, \ldots, n\}$ and  $ \widetilde{X}_t=\widehat{X}_t$ if $t>n$.
 An analogue  expression yields for   $ \widehat{X}_{-s}$, $ s \leq 0$.  
 This leads to the  expression  $ \widetilde{J}_n(\lambda_{j,n}) = J_n(\lambda_{j,n}) + \widehat{J}_n(\lambda_{j,n})$, where 
 the  ``extension"  part $\widehat{J}_n(\lambda_{j,n})$   can be written as 
\begin{align*}
\widehat{J}_n(\lambda_{j,n}) & = \frac{1}{ \widehat{\phi}_p(\lambda_{j,n})} \sum_{\ell =1}^p X_\ell \sum_{s=0}^{p-\ell} \widehat{\phi}_{\ell+s,p}e^{-i s \lambda_{j,n}} \\
& \ \ \ \ \  + e^{i n \lambda_{j,n}}\frac{1}{\overline{\widehat{\phi}_p(\lambda_{j,n})}} \sum_{\ell=1}^p X_{n+1-\ell}\sum_{s=0}^{p-\ell} \widehat{\phi}_{\ell+s,p} e^{i(s+1)\lambda_{j,n}},
\end{align*}
with $ \widehat{\phi}_p(\lambda) = 1-\sum_{s=1}^p \widehat{\phi}_{s,p} e^{-i s \lambda} $; see  Subba Rao and Yang (2021).

In the following we only describe how to modify the first part of the basic bootstrap algorithm, that is Step 1 to Step 3,  in order to get replicates of the boundary corrected Whittle estimators $ \breve{\theta}_n =  \arg \min_{\theta\in\Theta}\, D^{(bc)}_n(\theta, \widetilde{I}_n)$. The modifications needed for the second part of the basic bootstrap procedure which uses   convolved periodograms of subsamples (Step 4 to Step 5), easily follow from those  presented for  the first part.


\vspace*{0.25cm} 

\begin{enumerate}
\item[{\it Step 1$^{'}$:}]
\  Calculate Whittle's estimator $ \breve{\theta}_n= \arg \min_{\theta\in\Theta}\, D^{(bc)}_n(\theta, \widetilde{I}_n)$.
\item[{\it Step 2$^{'}$:}]
\  For $t=1,2, \ldots, n$, calculate  the pseudo random variables
\[ X_t^\ast = \sqrt{\frac{2\pi}{n}} \sum_{\lambda_{s,n}\in {\mathcal F}_n}\widehat{f}^{1/2}(\lambda_{s,n}) Z^\ast_{s,n} e^{i t \lambda_{s,n}},\]
where 
\[  Z^\ast_{s,n}= \frac{1}{\sqrt{2\pi n}} \sum_{t=1}^n \varepsilon_t^\ast e^{-i t \lambda_{s,n}}\]
and $ \varepsilon_1^\ast, \varepsilon_2^\ast, \ldots, \varepsilon_n^\ast$ are  i.i.d., $ {\mathcal N}(0,1)$ distributed random variables.
Calculate
\[ \widetilde{J}_n^\ast(\lambda_{j,n}) = J^\ast_n(\lambda_{j,n}) + \widehat{J}_n^\ast(\lambda_{j,n}) ,\]
where $ J^\ast_{n}(\lambda_{j,n}) = \sum_{t=1}^n   X^\ast_t \exp\{-i\lambda_{j,n} t\}$ and 
\begin{align*}
\widehat{J}^\ast_n(\lambda_{j,n}) & = \frac{1}{\widehat{\phi}^\ast_p(\lambda_{j,n})} \sum_{\ell =1}^p X^\ast_\ell \sum_{s=0}^{p-\ell} \widehat{\phi}^\ast_{\ell+s,p}e^{-i s \lambda_{j,n}} \\
& \ \ \ \ \  + e^{i n \lambda_{j,n}}\frac{1}{\overline{\phi^\ast_p(\lambda_{j,})}} \sum_{\ell=1}^p X^\ast_{n+1-\ell}\sum_{s=0}^{p-\ell} \widehat{\phi}^\ast_{\ell+s,p} e^{i(s+1)\lambda_{j,n}}.
\end{align*}
In the above expression, $ \widehat{\phi}^\ast_p(\lambda) = 1-\sum_{s=1}^p \widehat{\phi}^\ast_{s,p} e^{-i s \lambda} $  and 
 $(\widehat{\phi}^\ast_{s,p}, s=1,2, \ldots, p)^\top$   is the vector of Yule-Walker estimators obtained by fitting an AR(p) model to the pseudo time series $ X_1^\ast, X_2^\ast, \dots, X_n^\ast$. Define,
 \[ \widetilde{I}_n^\ast(\lambda_{j,n}) = \frac{1}{2\pi n} \widetilde{J}_n^\ast(\lambda_{j,n}) \overline{J_n^\ast}(\lambda_{j,n}).\]
\item[{\it Step 3$^{'}$:}]
 \ Let   
\[ \breve{\theta}^\ast_n = \arg \min_{\theta\in\Theta}\, D_n(\theta, \widetilde{I}^\ast_n).\]
Define
\[ \widetilde{V}_{1,n}^\ast = Var^\ast(\widetilde{M}_n^\ast)  \ \  \mbox{and} \ \ \widetilde{W}_n^\ast = \Big(\frac{\partial^2}{\partial \theta \partial \theta^\top}D_n(\theta, \widetilde{I}^*_n) \Big|_{\theta=\widehat{\theta}_0} \Big),\]
where 
\[\widetilde{M}^\ast_n=\frac{2\pi}{\sqrt{n}}
   \sum_{j \in {\mathcal G}(n)} g_{\widehat{\theta}_0}(\lambda_{j,n})  \big( \widetilde{I}_{n}^\ast(\lambda_{j,n}) - \widehat{f}(\lambda_{j,n}) \big),\]
   and  
 $g_{\widehat{\theta}_0}(\lambda)$ the $m$-dimensional vector
\[ g_{\widehat{\theta}_0}(\lambda) = \Big( g_{j,\widehat{\theta}_0}(\lambda) = -\frac{1}{2\pi}\frac{\partial}{\partial \theta_j} f^{-1}_\theta(\lambda)\Big|_{\theta=\widehat{\theta}_0}, \  j=1,2, \ldots, m\Big)^\top.\]
Calculate the pseudo random vector $\widetilde{Z}_n^\ast$ defined by
\[ \widetilde{Z}^\ast_n = \big(\widetilde{V}^\ast_{1,n}\big)^{-1/2}  \widetilde{W}^\ast_n \sqrt{n}(\breve{\theta}^*_n -\widehat{\theta}_0).\]
\end{enumerate}

\section{Practical Implementation and Numerical Results}
\label{se.numres}

\subsection{Choice of  bootstrap parameters}
To implement our procedure we need to choose two parameters, the bandwidth  involved in obtaining the  spectral density 
estimator $\widehat{f}$ and the  blocksize $b$, used in the convolved part of our procedure. 
Regarding $\widehat{f}$ we use a kernel type estimator obtained by smoothing the periodogram by means of  the Bartlett-Priestley kernel. The  bandwidth used for this estimator 
is obtained using a    frequency domain cross validation procedure; see  Beltr\~ao and Bloomfield (1987).  For the choice of the subsampling parameter $b$, observe first that, as we will see  in the next section, our simulation results seem  not to be  very sensitive with respect to the choice of this parameter, provided $b$  not chosen  too large with respect to $n$. Based on this empirical
observation  we use the following practical rule to select this parameter: $ b =4\cdot n^{0.25}$. This rule delivers a subsampling size which is not   too large  and at the same time also satisfies  the conditions of Assumption 4. However, in order to see the sensitivity of the bootstrap approximations obtained  with  respect to the choice of the subsampling parameter, we present  in Section~\ref{sec.sim}   results  for a large range of values of $b$. The aforementioned rule  for choosing $b$ has been applied  for obtaining the numerical results presented in   Section~\ref{sec.sun} for analyzing the real-life data example.

\subsection{Simulations} \label{sec.sim}
Let  $ {\mathcal F}_\theta$  be   the family containing  the spectral densities of  the first order autoregressive processes  given by  
$$ f_\theta(\lambda) = \frac{\sigma^2}{2\pi}(1+ a^2-2a \cos(\lambda))^{-1},$$
 where $ \theta = (\sigma^2, a) \in (0,+\infty) \times (-1,1)$.  To demonstrate  the advances of the  bootstrap procedure proposed in this paper, we concentrate in the following on the (standard)  
 Whittle estimator of $ a$  given by  
\[ \widehat{a}_n = \sum_{j \in {\mathcal G}(n)} I_n(\lambda_{j,n}) \cos(\lambda_{j,n}) \big/ \sum_{j \in {\mathcal G}(n)} I_n(\lambda_{j,n}). \]
Observe that 
$ a_0 = \int_{-\pi}^\pi \cos(\lambda)f_{\theta_0}(\lambda)d\lambda / \int_{-\pi}^\pi f_{\theta_0}(\lambda)d\lambda = \rho(1)$, where $ \rho(1)$ denotes the  first order  autocorrelation of the ``best fitting'' AR(1) process; see Section 2.   
As  we have seen,   the  distribution of  $ \sqrt{n}(\widehat{a}_n-a_0)$ 
depends on the  dependence properties of  the underlying process $ \{X_t;t\in\Z\}$.   In order to demonstrate the finite sample behavior and the capabilities
of the bootstrap procedure proposed to  approximate the distribution of $ \sqrt{n}(\widehat{a}_n-a_0)$ for a variety of situations, 
we 
consider time series  
$ X_1, X_2, \ldots, X_n$ stemming from the following  three processes:
\begin{enumerate}
\item[] {\bf Model I:} \ $X_t = 0.8 X_{t-1} + \varepsilon_t$, \  and  i.i.d. innovations $ \varepsilon_t\sim {\mathcal N}(0,1)$.
\item[] {\bf Model II:}  \ $ X_t =0.75 X_{t-1} +0.6X_{t-1}\cdot \varepsilon_{t-1} + \varepsilon_t$, \  and  i.i.d. innovations $ \varepsilon_t\sim {\mathcal Laplace}(0,0.1)$.
\item[] {\bf Model III:}\  $ X_t = \left\{ \begin{array}{lll}  -0.3 X_{t-1} + \varepsilon_t &  & \mbox{if $X_{t-1}\leq 0$} \\ & & \\ 0.8X_{t-1} + \varepsilon_{t} & & \mbox{if $ X_{t-1} >0$,}
\end{array} \right.$\\
and  i.i.d. innovations $ \varepsilon_t\sim {\mathcal Laplace}(0,0.1)$.
\end{enumerate}
Model I is a Gaussian AR(1) model.  Model II and Model III are nonlinear and   have been considered in Fan and Yao (2005) and Auestad and Tj{\o}stheim (1990), respectively. They have been modified so that they  are driven by Laplace (double exponential) distributed innovations with mean zero and parameter $0.1$.   
Notice that fitting a linear  AR(1) model to  time series stemming from the   nonlinear Models II and III,  resamples a  situation of model misspecification.

Two sample sizes, $n=50$ and $n=1000$ are considered in order to investigate  the small  sample  behavior of the bootstrap procedure proposed as well as its  consistency  behavior  when the length
of the  time series becomes large. In order to see the effects of the convolved subsampling step implemented in Step 4 through generating the pseudo random variables $ M^+_n$,  we also present results for  the bootstrap approximation of $ \sqrt{n}(\widehat{a}_n-a_0)$   using
 the multiplicative periodogram bootstrap only, that  is, the pseudo random variable  $L_{n,MB}^\ast=\sqrt{n}(\widehat{a}^\ast_n - \widehat{a}_0)$, generated in Step 3 of  the bootstrap algorithm of Section 3. 
 $B=1000$ bootstrap  replications have been used  in each run and the $d_1$-distances, between the exact distribution and the two bootstrap approximations,  that is  the multiplicative periodogram bootstrap 
 $  L_{n,MB}^\ast$  and  the hybrid periodogram bootstrap $ L_n^\ast$ as generated in Step 6, have been  calculated.  Notice that for $F$ and $G$ distribution functions,   $d_1=\int_0^1|F^{-1}(u)-G^{-1}(u)|du$.  To estimate the exact distribution of  $ \sqrt{n}(\widehat{a}_n-a_0)$, $R=10,000$ replications have been used. 
Figure~\ref{fig.Whittle} shows  averages of the $d_1$ distances calculated over $500$ repetitions for each of the three different time series models and for each of the two sample sizes considered.

 \begin{figure}[h]
\begin{center}
\includegraphics[angle=0,height=5.5cm,width=5.8cm]{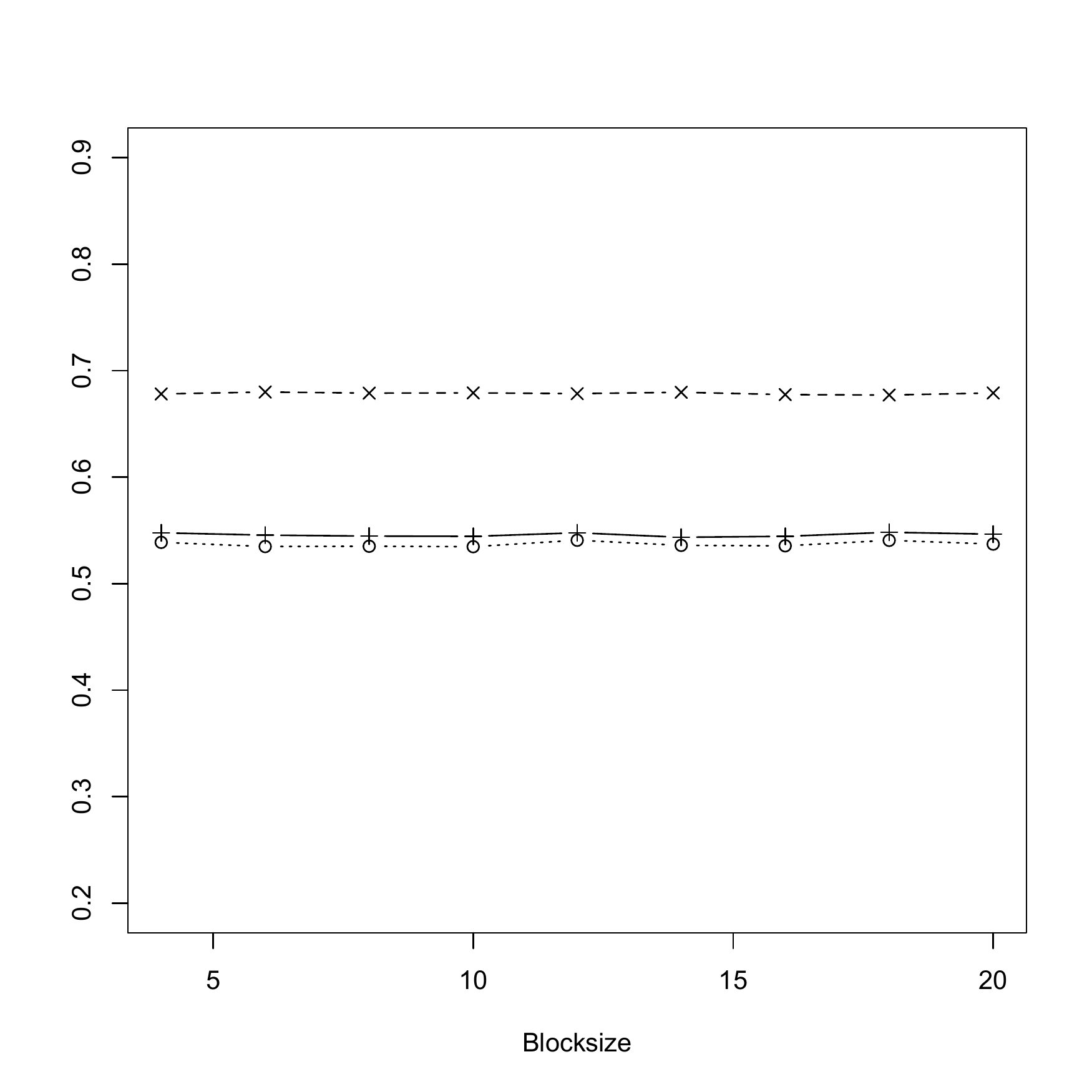}
\includegraphics[angle=0,height=5.5cm,width=5.8cm]{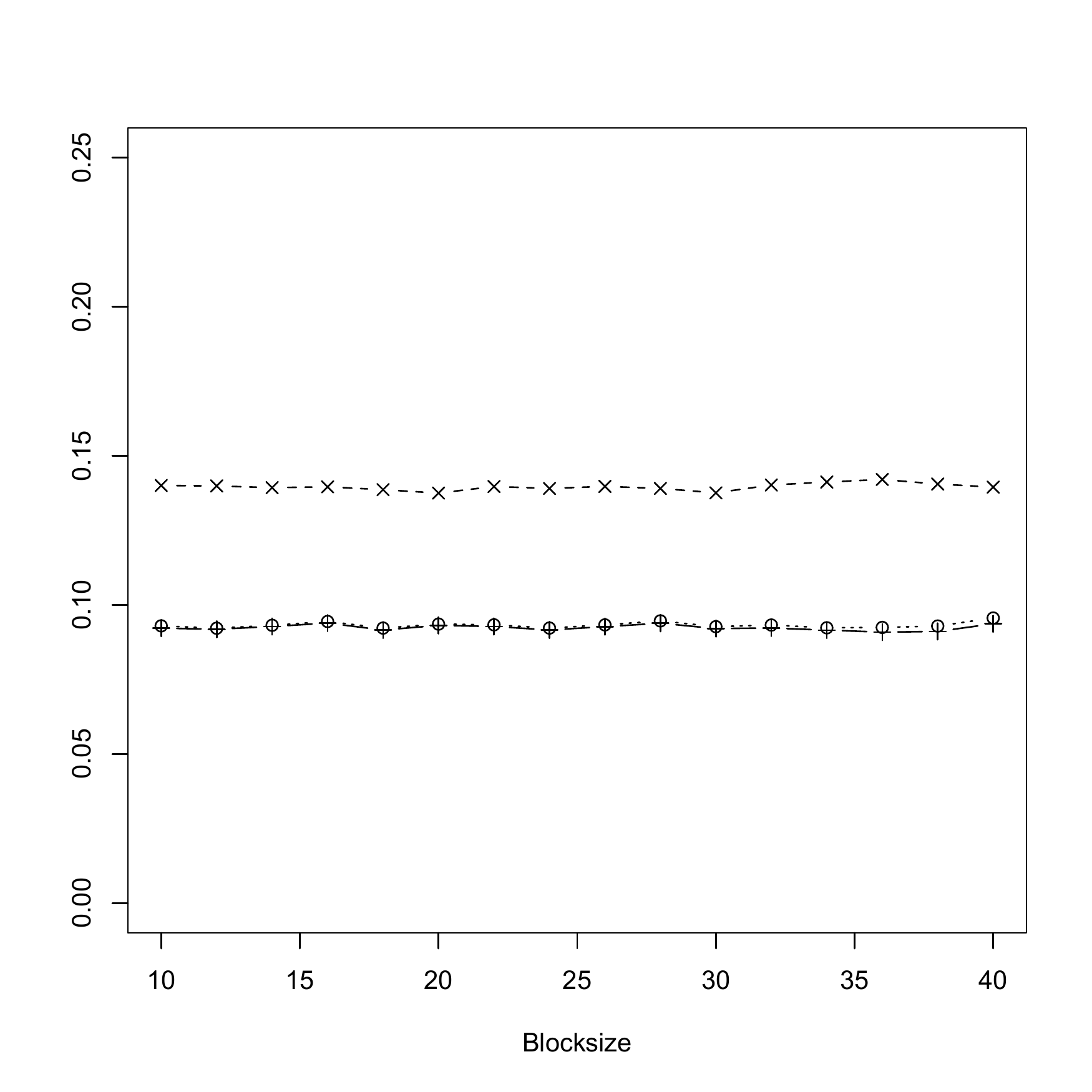}\\
\includegraphics[angle=0,height=5.5cm,width=5.8cm]{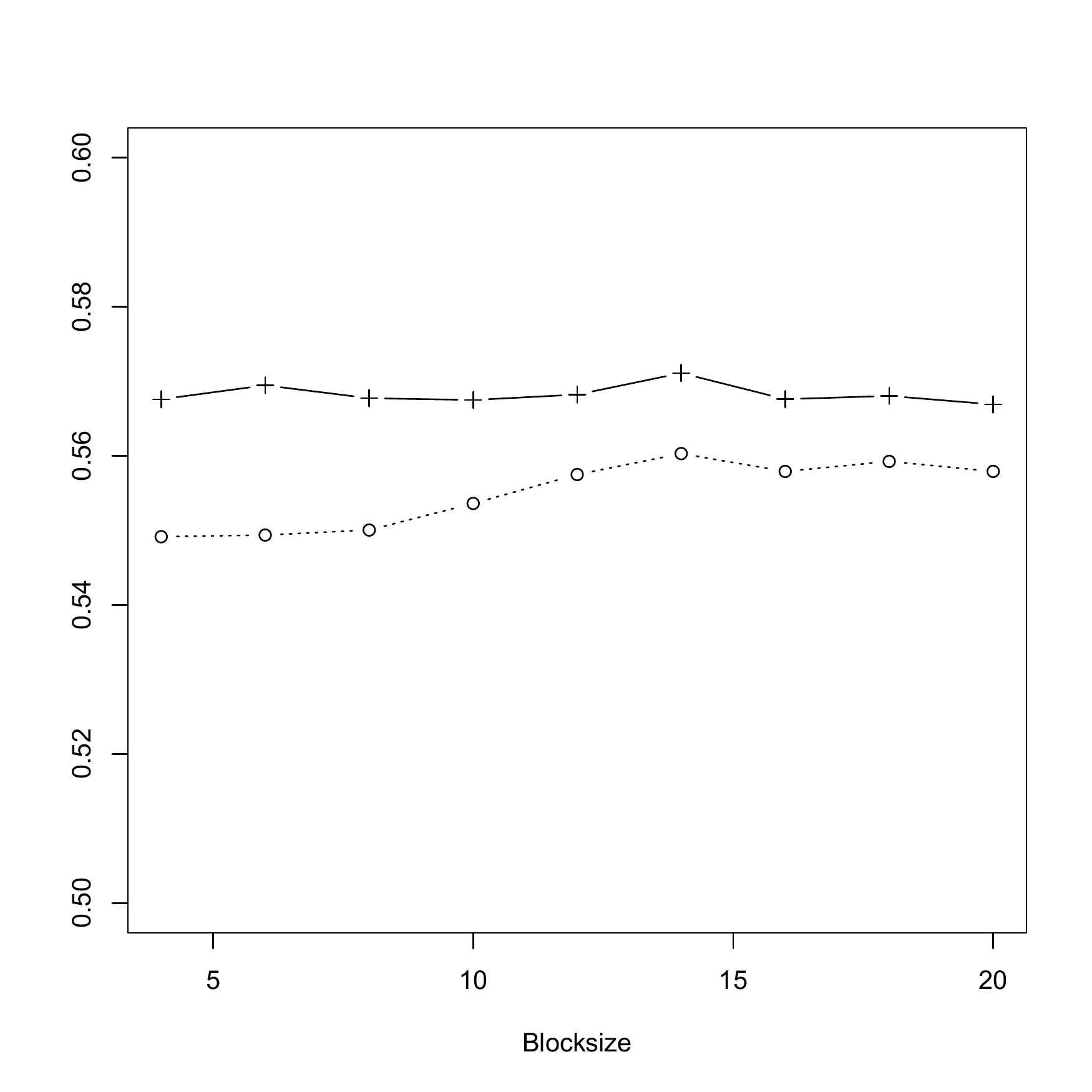}
\includegraphics[angle=0,height=5.5cm,width=5.8cm]{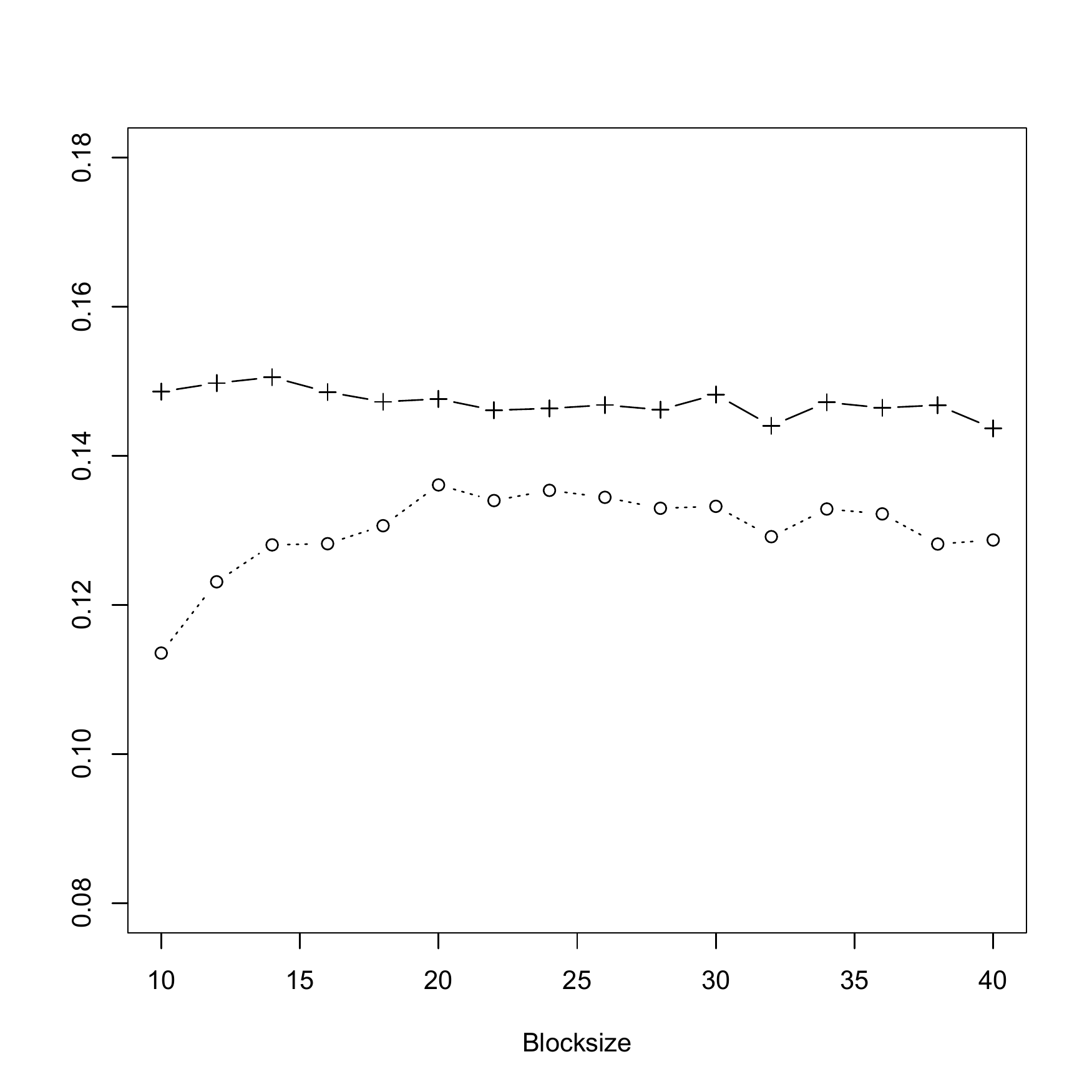}\\
\includegraphics[angle=0,height=5.5cm,width=5.8cm]{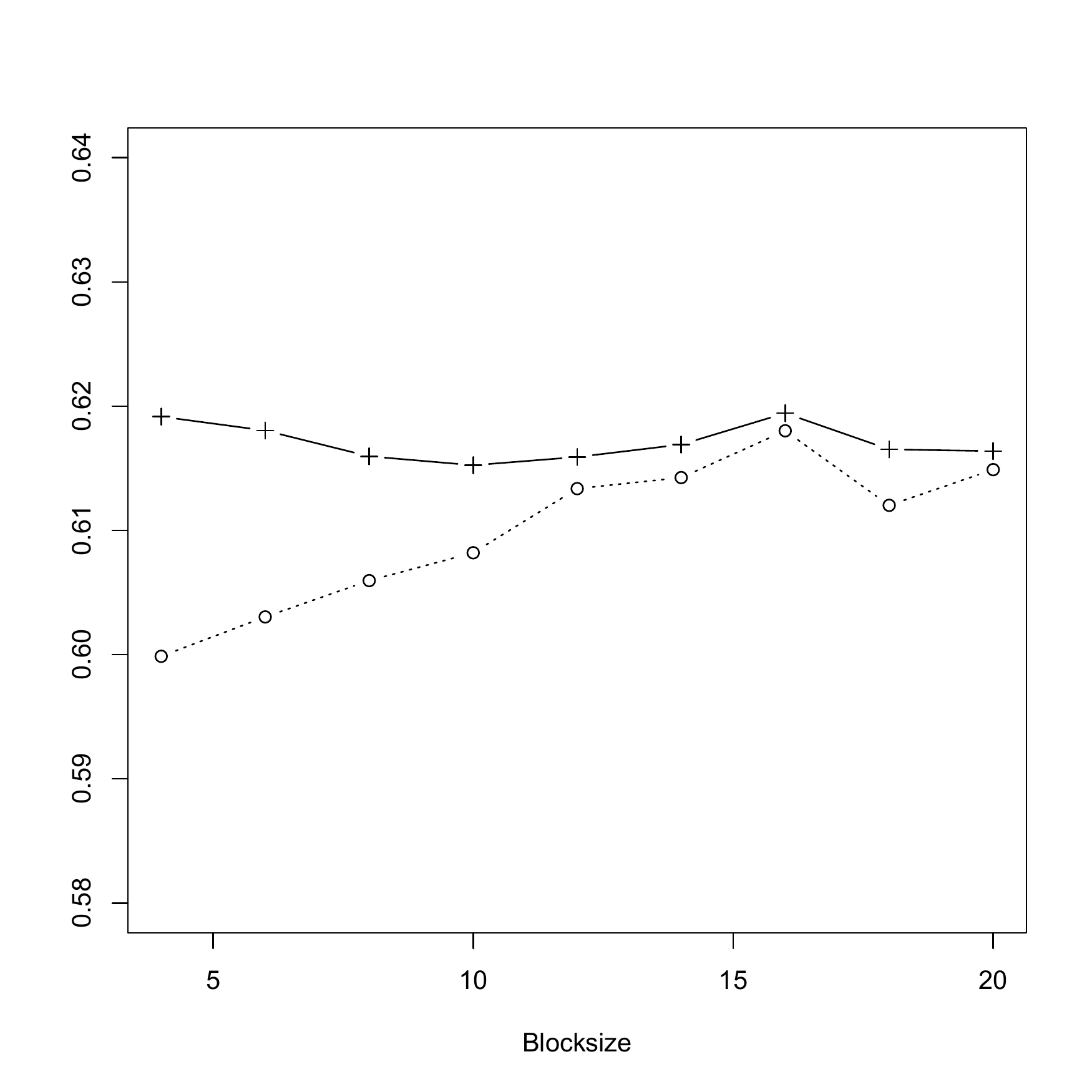}
\includegraphics[angle=0,height=5.5cm,width=5.8cm]{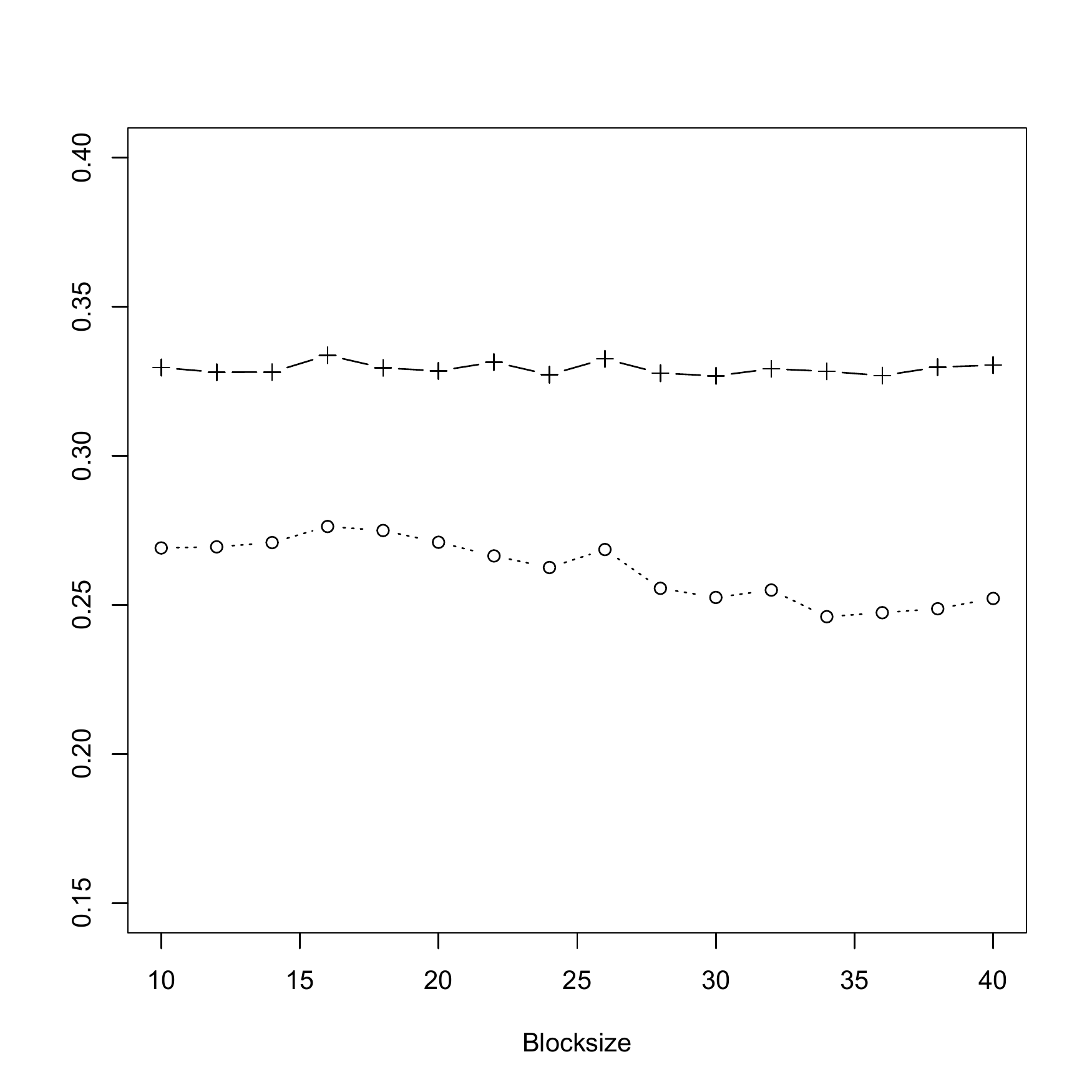}
\end{center}
\caption{Average $d_1$-distances  between the exact and the bootstrap distribution of the Whittle estimator $ \sqrt{n}(\widehat{a}_n-a_0)$ 
for various block sizes $b$. Left column n=50, right column, n=1,000.
First row
Model I,  second row Model II and third  row Model III.   The crosses denote the $d_1$-distance of the multiplicative  periodogram bootstrap estimation  $ L^\ast_{n,MB}$ and 
the circles  of  the hybrid periodogram bootstrap estimation $ L^\ast_n$. The dashed lines with the plus symbol in the first row refer to the average $d_1$-distance of the asymptotic Gaussian approximation.}
\label{fig.Whittle}
\end{figure}
As it is seen from  Figure~\ref{fig.Whittle}, for the case of the Gaussian AR(1) model and  for both sample sizes considered, the  bootstrap approximations $L_n^\ast$ and $ L_{n,MB}^\ast$ behave 
very similar and the corresponding $d_1$ distances are very close to each other.   Recall that  in this case the distribution of $\sqrt{n}(\widehat{a}_n-a_0) $ only depends on the second order characteristics of the underlying AR(1) process and therefore, the multiplicative bootstrap estimation 
$L_{n,MB}^\ast$ also provides a consistent estimation of the distribution of $ L_n$. As it is also seen, for both sample sizes considered  and for the case of the Gaussian AR(1) model, both  frequency domain  bootstrap procedures outperform the asymptotic Gaussian approximation. This is  due to the skewness of the distribution of $ \sqrt{n}(\widehat{a}-a_0)$ which does not vanish  even for  $n=1000$ observations. In the case of the nonlinear models considered,   
 that is for Model II and  Model III, the behavior of the multiplicative and of the hybrid bootstrap is very different.  Recall that in  these cases, 
  the multiplicative bootstrap fails  to appropriately capture  the fourth order characteristics 
of the underlying nonlinear processes that affect the distribution of $ \sqrt{n}(\widehat{a}_n-a_0)$. This leads to a larger $d_1$ distance of the bootstrap estimator  $ L^\ast_{n,MB}$ compared to   the hybrid  bootstrap estimator  $ L^\ast_n$. The hybrid bootstrap  captures
 these characteristics and performs   
    much better leading to an overall  smaller $d_1$-distance.  For the case $n=50$, this is true for 
   all block sizes $b$  and for all models considered. Only for Model III and for the sample size of $n=50$,  the behavior of the hybrid periodogram bootstrap gets closer to  that of the multiplicative bootstrap  when the  block size $b$ becomes too large.  Finally, for the sample size of $ n=1,000$ observations, the advantages of the hybrid bootstrap procedure are clearly  seen  in  the corresponding exhibits of Figure~\ref{fig.Whittle} for all models. 

\subsection{Periodicity of Annual Sunspot Data}  \label{sec.sun} \  We consider the  yearly mean total  sunspot numbers from 1700 to 2020 available at   www.sidc.be/silso/datafiles. A  plot of the  corresponding time series consisting of  $n=321$  observations,  is shown  in Figure 2(a). Our aim   is to estimate the main periodicity of this time series and to infer   properties  of the estimator used by applying the frequency domain  bootstrap procedure proposed in this paper. To make things precise, suppose that  the stochastic process generating the observed yearly mean sunspot data
possesses a  spectral density  $f$ and assume that 
a unique frequency $ \lambda_{\max}\in (0,\pi)$ exists such that $ \lambda_{\max} =\arg\min_{\lambda} f(\lambda) $. We are  interested in the main periodicity of the yearly sunspot numbers defined as   the parameter 
 $ P_X=2\pi/\lambda_{\max}$.  
 
 One approach  to estimate this parameter is to use the class of linear  $ AR(p)$ process to get an  estimate of the frequency  $ \lambda_{\max}$. To elaborate, suppose that an AR(p) model is fitted to the time series of sunspot numbers  and that $\widehat{\lambda}_{\max,AR}$ is the (unique) frequency in $(0,\pi)$ defined by  $ \widehat{\lambda}_{\max,AR} =\arg\min_{\lambda} \widehat{f}_{AR}(\lambda) $,
 where $  \widehat{f}_{AR} $ is the spectral density of the estimated AR(p) model. 
 The estimator of $P_X$ obtained following  this approach is  then  defined as  $\widehat{P}_X= 2\pi/\widehat{\lambda}_{\max,AR}$.
Observe   that  consistency of the estimator  $ \widehat{P}_X$ only requires  that $\widehat{\lambda}_{\max,AR} \stackrel{P}{\rightarrow} \lambda_{\max}$, as $n\rightarrow\infty$. This can be  achieved if  the  spectral density, say  $f_L$, to which $\widehat{f}_{AR}$  uniformly converges, that is, $ \sup_{\lambda\in[0,\pi]}|\widehat{f}_{AR}(\lambda)-f_L(\lambda)| \stackrel{P}{\rightarrow} 0$, satisfies
    $ \lambda_{\max,L}=\lambda_{\max}$, where $\lambda_{\max,L}= \arg\min_{\lambda} f_{L}(\lambda)$. Hence  consistency of $\widehat{\lambda}_{\max,AR} $ only requires that the limiting spectral density  $f_L$ to which $\widehat{f}_{AR}$ uniformly converges in probability,   has its largest peak at the same frequency as the spectral density $f$ of interest. 
 One way to achieve  this,   is to allow for the order $p$ of the AR model fitted, to increase to infinity at some appropriate rate as $n$ increases to infinity. Under certain conditions  it can then be shown that $ f_L=f$, i.e., $  \sup_{\lambda\in[0,\pi]}|\widehat{f}_{AR}(\lambda)-f(\lambda)| \stackrel{P}{\rightarrow} 0$  and that the corresponding estimator  $ \widehat{P}_{X}=2\pi/\widehat{\lambda}_{\max,AR}$ achieves   the rate $ \widehat{P}_{X} =P_X+O_P(p^{3/2}/n^{1/2})$;
 see  Newton and Pagano (1983).  
 
 However, an alternative way to consistently estimate $ P_X$ is the following. Suppose that there exists a finite order AR(p) model possessing a spectral density  $f_{AR}$ such that $ \lambda_{\max,AR} =\lambda_{\max}$, where $ \lambda_{\max,AR} =\arg\min_{\lambda} f_{AR}(\lambda) $ and $ f_{AR}$ denotes the spectral density of the AR(p) process.    Then $  \sup_{\lambda\in[0,\pi]}|\widehat{f}_{AR}(\lambda)-f_{AR}(\lambda)| \stackrel{P}{\rightarrow} 0$  implies  $ \widehat{P}_{X} = 2\pi/\widehat{\lambda}_{\max,AR} \stackrel{P}{\rightarrow} P_X$  and  this estimator  converges at the parametric  rate $  \widehat{P}_{X} =P_X+O_P(1/n^{1/2})$.  Notice that  consistency of the described approach,     does not rely  on the assumption  that the AR(p) model correctly describes  the  entire stochastic structure of the  process generating  the sunspot data. Not even the entire autocovariance structure of the sunspot time series  has to appropriately be captured by the AR(p) model. What is solely required is that the spectral density  $f_{AR}$  of the AR(p) process  has its main peak at the same frequency as the spectral density $f$ of the  stochastic process generating the sunspot time series.  The  AR(p) model is then solely used  as a vehicle   to  construct  an estimator of  
the main periodicity $P_X$. Moreover, in conjunction with the frequency domain bootstrap,  this approach  also   allows 
for the  investigation of    the sampling properties of the estimator $\widehat{P}_X $  and for quantifying  the uncertainty associated with estimating the parameter $P_X$ of interest.  

For  the yearly sunspot time series, selecting  an AR(p) model  using   Akaike's Information Criterion (AIC),   leads  to an AR(9) model. 
 However, and  as already mentioned,  since we are not interested in parametrizing the entire autocovariance structure of the yearly sunspot numbers but solely in consistently estimating the frequency $\lambda_{\max}$, an  AR(2) model, $ X_t=a_1X_{t-1} + a_2 X_{t-2} + \varepsilon_t$,  also can be used  for this purpose.
  Figure 2(b) demonstrates this by showing  the periodogram of the sunspot time series together with the spectral densities  of the fitted  AR(2) and AR(9) models.  Notice that the  periodogram of this time series takes its maximum value at the Fourier frequency $\lambda_{j,n}=0.090625$, which corresponds to a periodicity of $11.034$ years. Fitting  an  AR(2)  model also  leads to the   estimate $\widehat{\lambda}_{\max,AR}=0.090625$, which  corresponds to the  same estimate of the   main periodicity   $\widehat{P}_X=11.034$.
Fitting the   AR(9) model leads to the estimates $   0.09375   $  for  the frequency $\lambda_{\max} $ and   $10.667$ years for  the main periodicity $P_X$.  We, therefore, proceed by  using  the more parsimonious AR(2) model  for our analysis.  In particular, 
we apply the frequency domain bootstrap procedure proposed in this paper to generate replicates of the the estimated parameters $ \widehat{\theta}_n= (\widehat{\sigma}^2, \widehat{a}_1 ,\widehat{a}_2)^\top$ of the AR(2) model.
 Clearly and since $f = f_{AR}$  is  not a reasonable assumption in our context, we are in the setting of model misspecification. Using the bootstrap replicates of the estimated parameters,  we get replicates of the estimated spectral density of the AR(2) model,
 say $\widehat{f}_{AR}^\ast$. 
 Bootstrap replicates 
 of $\widehat{\lambda}_{\max,AR}$ can   then be obtained as $ \widehat{\lambda}_{\max,AR}^\ast =\arg\min_{\lambda} \widehat{f}^\ast_{AR}(\lambda)  $ which lead to  bootstrap replicates 
 $ \widehat{P}^\ast_X=2\pi/\widehat{\lambda}_{\max,AR}^\ast$ of the estimator of the main periodicity $\widehat{P}_{X}=2\pi/\widehat{\lambda}_{\max,AR}$  used. By repeating these steps a large number, say $B$, of times,   bootstrap estimators  of  the distribution of $ \widehat{\lambda}_{\max,AR}$  and, conseqeuntly, of $ \widehat{P}_X$, are obtained.  Figure 2(c) presents a histogram of  $B=1,000$ bootstrap replicates of $\widehat{\lambda}_{\max,AR}^\ast$  and Figure 2(d)   of  the corresponding estimates $ \widehat{P}^\ast_X=2\pi/\widehat{\lambda}_{\max,AR}^\ast$ obtained by using   a grid of $500$ equidistant frequencies in the interval $(0,\pi)$.  The corresponding  $95\%$ confidence interval for the main periodicity of the sunspot time series, based on   bootstrap percentages, is then given by $[9.90, 12.98]$.
 
  \begin{figure}[h]
\begin{center}
\includegraphics[angle=0,height=16cm,width=16cm]{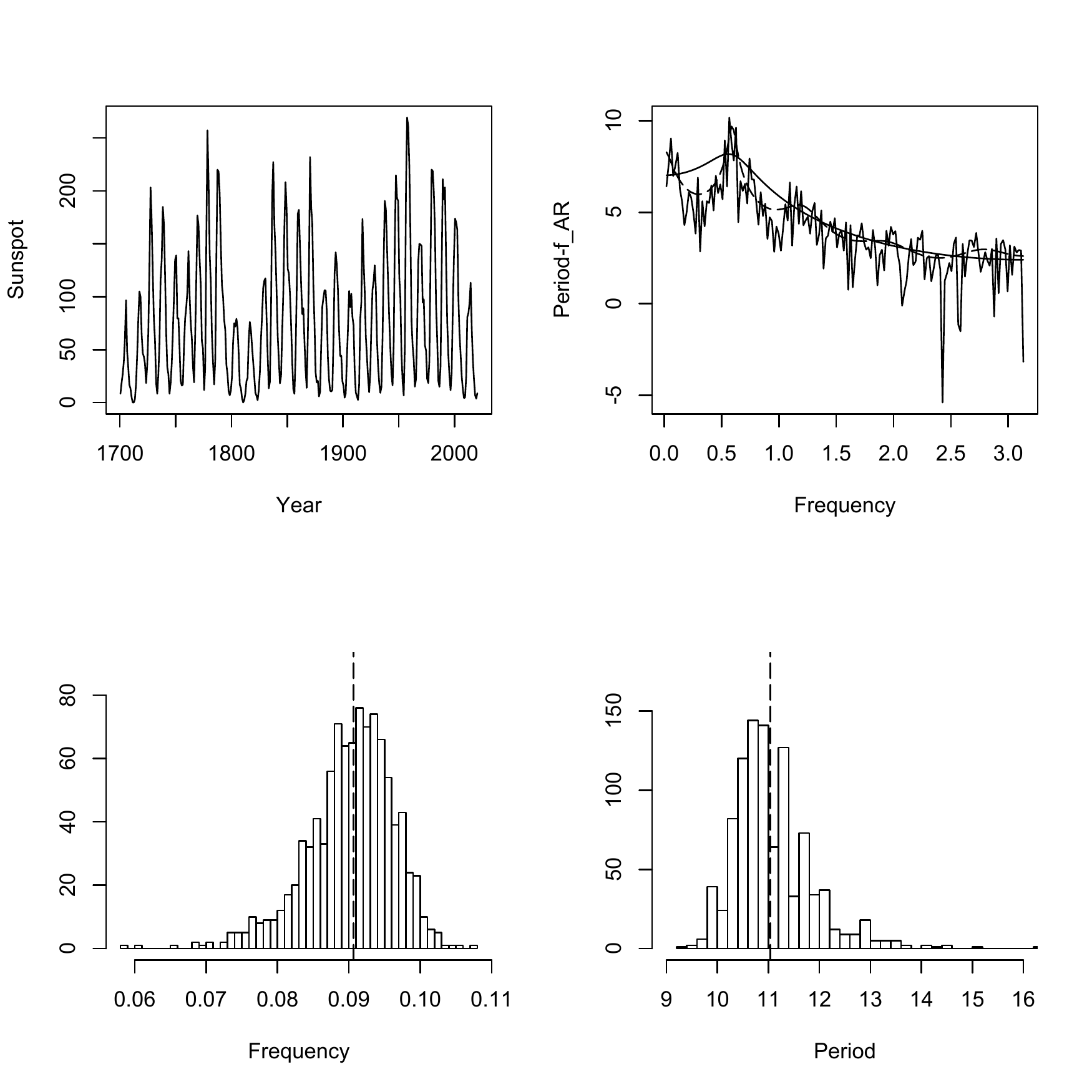}\\
\end{center}
\caption{Reading clockwise from top to bottom: (a) Time series of yearly mean sunspot numbers. (b) Periodogram of the time series with estimated spectral densities of the AR(2) model (solid line) and of the AR(9) model (dashed line), log-scale. (c) Histogram of $B=1000$ replications of  $ \widehat{\lambda}^\ast_{\max,AR} $ and  (d) histogram of the corresponding  replications of the estimated main periodicity$ \widehat{P}^\ast_X$, both   using  the AR(2) model. The estimated values $ \widehat{\lambda}_{\max,AR}$ and $ \widehat{P}_{X}$ are indicated in (c) and (d) by vertical dashed lines.}
\label{fig.Sunspot}
\end{figure}

\section{Auxiliary Results and Proofs}  \label{sec.7}

To simplify notation we write  $ D_n^{(j)}(\mathring \theta,I_n^\ast)$, $\mathring \theta\in\Theta$,  for the $j$th derivative of $ D_n(\theta, I^\ast_n)$ with respect to $\theta$ evaluated at $ \theta=\mathring\theta$.  We also write $ \|\cdot \|$ for the Euclidean norm in $\Re^m$ and $ \|A\|_F$ for the Frobenius norm of a matrix $ A \in \Re^{m\times m}$.  Furthermore, at different places we will use  the expansion
\begin{equation} \label{eq.ML-Taylor}
D^{(1)}_n(\theta,I^\ast_n) = D^{(1)}_n(\widehat{\theta}_0,I^\ast_n) + D^{(2)}_n(\widehat{\theta}_0,I^\ast_n)(\theta-\widehat{\theta}_0)  +R_n(\theta)(\theta-\widehat{\theta}_0),
\end{equation}
where $ R_n(\theta)=D^{(2)}_n(\theta^+,I^\ast_n)-D^{(2)}_n(\widehat{\theta}_0,I^\ast_n)$ for  some $\theta^+\in \Theta$ such that $ \|\theta^+-\widehat{\theta}_0\|\leq \|\theta-\widehat{\theta}_0\|$.

We  first establish the following two   useful lemmas.

\begin{lemma} \label{le.Appendix1} If  Assumption 2 and Assumption 3 are satisfied, then the following assertions hold true. 
\begin{enumerate}
\item[(i)]\  \   $ \widehat{\theta}_0 \stackrel{P}{\rightarrow} \theta_0$.
\item[(ii)] \ $ \big\| \widehat{\theta}_n^\ast -\widehat{\theta}_0\big\| \stackrel{P}{\rightarrow} 0$, in probability.
\end{enumerate}
\end{lemma}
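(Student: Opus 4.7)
My plan is to prove both parts by a Wald-type M-estimator consistency argument: in each case I show that the random criterion being minimized converges uniformly in $\theta\in\Theta$ to a continuous limit whose unique, well-separated minimizer is the claimed limit point, so that argmin continuity on the compact set $\Theta$ delivers the result. For (i) the limit is $D(\cdot,f)$, whose uniqueness of minimizer $\theta_0$ in the interior of $\Theta$ and continuity are supplied by Assumption 2. For (ii) I will first show that $\widehat{\theta}_n^\ast \to \theta_0$ in bootstrap probability (in $P$-probability), and then combine with (i) to get the claim.

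For part (i), I decompose
\[
D_n(\theta,\widehat{f}) - D(\theta,f) = \bigl[D_n(\theta,\widehat{f}) - D_n(\theta,f)\bigr] + \bigl[D_n(\theta,f) - D(\theta,f)\bigr].
\]
The first bracket equals $n^{-1}\sum_{j\in \mathcal{G}(n)} (\widehat{f}(\lambda_{j,n}) - f(\lambda_{j,n}))/f_\theta(\lambda_{j,n})$ and is bounded in absolute value by $\delta^{-1}\sup_\lambda |\widehat{f}(\lambda) - f(\lambda)|$ uniformly in $\theta$, by Assumption 2(i), so it vanishes in probability by Assumption 3. The second bracket is a Riemann-sum discrepancy for the integrand $\log f_\theta(\lambda) + f(\lambda)/f_\theta(\lambda)$, which is jointly continuous on the compact set $[-\pi,\pi]\times \Theta$ by Assumption 2(i),(iii) together with continuity of $f$ (a consequence of the cumulant summability in Assumption 1(i)); joint uniform continuity then gives convergence of this error to zero uniformly in $\theta$. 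Consequently $\sup_{\theta\in\Theta}|D_n(\theta,\widehat{f}) - D(\theta,f)| = o_P(1)$, and the standard Wald argument yields $\widehat{\theta}_0 \stackrel{P}{\rightarrow} \theta_0$.

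For part (ii), the crux is to show
\[
\sup_{\theta\in\Theta}\bigl|\Delta_n(\theta)\bigr| = o_{P^\ast}(1)\quad \text{in $P$-probability,}\qquad \Delta_n(\theta) := D_n(\theta,I_n^\ast) - D_n(\theta,\widehat{f}).
\]
Since $I_n^\ast(\lambda_{j,n}) = \widehat{f}(\lambda_{j,n})U_j^\ast$ with $U_j^\ast$ i.i.d.\ standard exponential, $\Delta_n(\theta) = n^{-1}\sum_{j\in \mathcal{G}(n)} \widehat{f}(\lambda_{j,n})(U_j^\ast - 1)/f_\theta(\lambda_{j,n})$, so for each fixed $\theta$ the conditional bootstrap mean is zero and the conditional variance is $O_P(1/n)$ by Assumption 2(i) and Assumption 3. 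To upgrade pointwise to uniform convergence on the compact set $\Theta$, I would use a finite-cover stochastic equicontinuity argument: by Assumption 2(iv), $\theta\mapsto f_\theta^{-1}(\lambda)$ is Lipschitz in $\theta$ uniformly in $\lambda$, so for a mesh of $\Theta$ of size $\eta \to 0$,
\[
\sup_\theta |\Delta_n(\theta)| \le \max_{k\le K(\eta)}|\Delta_n(\theta_k)| + C\eta \cdot \frac{1}{n}\sum_{j\in \mathcal{G}(n)} \widehat{f}(\lambda_{j,n})\bigl(U_j^\ast+1\bigr),
\]
where the trailing sum is $O_{P^\ast}(1)$ in $P$-probability, while the first maximum is $o_{P^\ast}(1)$ for fixed $\eta$ by a union bound using the $O_P(1/n)$ variance. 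Letting $\eta\to 0$ gives the uniform bound. Combined with part (i) this yields $\sup_\theta |D_n(\theta,I_n^\ast) - D(\theta,f)| = o_{P^\ast}(1)$ in $P$-probability, and a second Wald-type argument applied to the criterion $D_n(\cdot,I_n^\ast)$ shows $\widehat{\theta}_n^\ast\to\theta_0$ in bootstrap probability. Together with (i), this gives (ii).

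The main obstacle is the bootstrap stochastic equicontinuity in part (ii): the process $\theta\mapsto D_n(\theta,I_n^\ast)$ is a weighted sum with data-dependent weights $\widehat{f}(\lambda_{j,n})$, so the covering/chaining bound has to be uniform with respect to both the bootstrap randomness ($U_j^\ast$) and the data randomness (via $\widehat{f}$). Once this uniform control is established, the rest is standard Wald consistency on a compact parameter space.
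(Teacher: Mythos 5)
Your proof is correct, but for part (ii) it takes a genuinely different route from the paper, and the comparison is worth spelling out.

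For part (i) your argument and the paper's are essentially the same Wald-type M-estimation consistency argument; you show $\sup_{\theta\in\Theta}|D_n(\theta,\widehat f)-D(\theta,f)|=o_P(1)$ directly, while the paper splits $|D(\widehat\theta_0,f)-D(\theta_0,f)|$ into a four-term telescope through $D_n(\widehat\theta_0,f)$, $D_n(\widehat\theta_0,\widehat f)$ and $D(\widetilde\theta_0,\widehat f)$ and bounds each piece using the same ingredients (boundedness of $1/f_\theta$, Assumption 3, Riemann-sum errors). One small remark: the $O(n^{-1})$ Riemann-sum bounds require some $\lambda$-smoothness of the integrand (hence of $f$), which the lemma's hypotheses (Assumptions 2--3 only) do not explicitly supply. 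You flag this by appealing to Assumption 1(i); the paper's proof relies on it silently. This is a shared imprecision rather than a defect of your argument.

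For part (ii) the approaches diverge. The paper does not prove consistency through a uniform law of large numbers. Instead it linearizes the bootstrap score $D_n^{(1)}(\cdot,I_n^\ast)$ around $\widehat\theta_0$, shows via Lahiri's perturbation lemma that $\big(D_n^{(2)}(\widehat\theta_0,I_n^\ast)\big)^{-1}$ is bounded with high probability, defines a map $w(\widehat\theta_0-\theta)$ whose unique fixed point is $\widehat\theta_0-\widehat\theta_n^\ast$, proves that $w$ maps the ball of radius $C\log n/\sqrt n$ into itself, and then invokes Brouwer's fixed-point theorem. The output is the sharper statement $\|\widehat\theta_n^\ast-\widehat\theta_0\|=O_P(\log n/\sqrt n)$, of which the lemma's $o_P(1)$ claim is a corollary. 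Your route instead establishes $\sup_\theta|D_n(\theta,I_n^\ast)-D_n(\theta,\widehat f)|=o_{P^\ast}(1)$ via the finite-cover plus Lipschitz bound in $\theta$ (which Assumption 2(iv) and compactness do indeed supply), combines with part (i), and then runs a second Wald argument. Both are valid. Your version is more elementary and self-contained, and avoids the implicit requirement in the paper's argument that $\widehat\theta_n^\ast$ be the unique root of the bootstrap score; the paper's version yields a near-parametric rate, which — while not used in the statement of the lemma or in the subsequent proof of Theorem 4.1 (only consistency of $\widehat\theta_n^\ast$ is needed to control the remainder $R_n(\widehat\theta_n^\ast)$) — is a strictly stronger conclusion.
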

\begin{proof}
Consider  (i). Since  $\widehat{\theta}_0 \in \Theta$ and $\Theta \subset  {\mathbb R}^m$ is compact, $\{\widehat{\theta}_0,n\in\N\}$ is a bounded sequence. 
By the continuity of $D(\cdot,f)$ as a function on $\theta \in \Theta$, it suffices to show that 
$$ D(\widehat{\theta}_0, f) \stackrel{P}{\rightarrow} D(\theta_0,f).$$
It yields
\begin{align*}
|D(\widehat{\theta}_0,f) -D(\theta_0,f)| & \leq |D(\widehat{\theta}_0,f) -D_n(\widehat{\theta}_0,f)| + |D_n(\widehat{\theta}_0,f) -D_n(\widehat{\theta}_0,\widehat{f})|\\
& \ \ \ \ + |D_n(\widehat{\theta}_0,\widehat{f}) -D(\widetilde{\theta}_0,\widehat{f})| + |D(\widetilde{\theta}_0,\widehat{f}) -D(\theta_0,f)| \\
&= \sum_{j=1}^4 D_{j,n},
\end{align*}
where $ \widetilde{\theta}_0 =\mbox{argmin}_{\theta} D(\theta,\widehat{f})$ and with an obvious notation for $D_{j,n} $, $j=1,\ldots, 4$.
We show that $  D_{j,n} \stackrel{P}{\rightarrow} 0$, as $ n \rightarrow\infty$, for $ j=1, \ldots,4$.
We have
\begin{align*}
 D_{1,n} & \leq  \sup_{\theta\in \Theta}\big|\frac{1}{2\pi}\int_{-\pi}^\pi \log f_\theta(\lambda)d\lambda - \frac{1}{n}\sum_{j\in{\mathcal G}(n)} \log f_\theta(\lambda_{j,n}) \big|\\
& \ \  +\sup_{\theta\in\Theta} \big|\frac{1}{2\pi} \int_{-\pi}^\pi \frac{f(\lambda)}{f_\theta(\lambda)}d\lambda -   \frac{1}{n}
\sum_{j\in{\mathcal G}(n)}  \frac{f(\lambda_{j,n})}{f_\theta(\lambda_{j,n})}\big|  = O(n^{-1}), 
\end{align*}
where the last equality follows by the differentiability of $ 1/f_{\theta}(\lambda) $ with respect to $\theta$, the boundedness properties of  $f_\theta\in {\mathcal F}_\theta$  and  the fact that $ \sup_{\theta\in\Theta}\big|\partial/\partial \theta f^{-1}_\theta(\lambda) \big| $ is bounded uniformly in $\lambda$.
Using $ 1/f_\theta(\lambda)  \leq 1/\delta$  and Assumption 3, we get 
\begin{align*}
 D_{2,n} & \leq \sup_{\lambda\in [-\pi,\pi]}\big| \widehat{f}(\lambda) -f(\lambda)\big| \sup_{\theta\in\Theta}\frac{1}{n}\sum_{j\in{\mathcal G}(n)} \frac{1}{f_\theta(\lambda_{j,n})}  \\
& = O(1)\sup_{\lambda\in [-\pi,\pi]}\big| \widehat{f}(\lambda) -f(\lambda)\big| \stackrel{P}{\rightarrow} 0.
\end{align*}
To establish   $ D_{3,n}\stackrel{P}{\rightarrow} 0$, it suffices to show  that  $ \sup_{\theta\in\Theta}|D_n(\theta,\widehat{f}) -D(\theta,\widehat{f})|\stackrel{P}{\rightarrow} 0$. For this we have 
\begin{align*}
 \sup_{\theta\in\Theta}|D_n(\theta,\widehat{f}) -D(\theta,\widehat{f})| & \leq  
  \sup_{\theta\in \Theta}\big| \frac{1}{n}\sum_{j\in{\mathcal G}(n)} \log f_\theta(\lambda_{j,n}) - \frac{1}{2\pi}\int_{-\pi}^\pi \log f_\theta(\lambda)d\lambda \big|\\
& \ \  +\sup_{\theta\in\Theta} \big|   
\frac{1}{n}\sum_{j\in{\mathcal G}(n)}  \frac{\widehat f(\lambda_{j,n})}{f_\theta(\lambda_{j,n})} - \frac{1}{2\pi} \int_{-\pi}^\pi \frac{\widehat f(\lambda)}{f_\theta(\lambda)}d\lambda\big|\\
& \leq O(n^{-1}) + \sup_{\theta\in\Theta} \big|\frac{1}{n}\sum_{j\in{\mathcal G}(n)} \frac{1}{f_\theta(\lambda)}(\widehat{f}(\lambda_{j,n}) - f(\lambda_{j,n}) )  \big| \\
& \ \  + \sup_{\theta\in\Theta} \big|\frac{1}{n}\sum_{j\in{\mathcal G}(n)} \frac{f(\lambda_{j,n})}{f_\theta(\lambda_{j,n})} -\frac{1}{2\pi} \int_{-\pi}^\pi \frac{f(\lambda)}{f_\theta(\lambda)}d\lambda   \big| \\
& \ \  +  \sup_{\theta\in\Theta} \big|\frac{1}{2\pi} \int_{-\pi}^\pi \frac{1}{f_\theta(\lambda)} ( f(\lambda)- \widehat f(\lambda))d\lambda   \big|\\
& = O(n^{-1}) + O(1)\sup_{\lambda\in [-\pi,\pi]}\big| \widehat{f}(\lambda) -f(\lambda)\big|,
\end{align*}
where the last equality follows because   the second and the last term of the last bound above  is $ O(1)\sup_{\lambda\in [-\pi,\pi]}\big| \widehat{f}(\lambda) -f(\lambda)\big|$ and the  third term is $ O(n^{-1})$.  Finally, 
 $ D_{4,n}\stackrel{P}{\rightarrow} 0$ follows from $ \sup_{\theta\in\Theta}|D(\theta,\widehat{f}) -D(\theta,f)|\stackrel{P}{\rightarrow} 0$, which holds true  since  
\begin{align*}
 \sup_{\theta\in\Theta}|D(\theta,\widehat{f}) -D(\theta,f)| &  \leq \sup_{\theta\in\Theta} \big|\frac{1}{2\pi} \int_{-\pi}^\pi \frac{1}{f_\theta(\lambda)} ( f(\lambda)- \widehat f(\lambda))d\lambda   \big|\\
 & \leq  O(1)\sup_{\lambda\in [-\pi,\pi]}\big| \widehat{f}(\lambda) -f(\lambda)\big|.
\end{align*}

Consider  (ii).   
Recall that by Assumption 3, the matrix $ W$ is nonsingular. By Lemma 4.2 of Lahiri (2003),  $ D^{(2)}_n(\widehat{\theta}_0,I_n^\ast) $ is nonsingular, if  for $\delta>0$, 
\[ \|D^{(2)}_n(\widehat{\theta}_0,I_n^\ast)-W\|_F \leq \delta/\|W^{-1}\|_F,\]
on a set with  probability arbitrarily close to one for $n$ large enough. This  holds true since $ \|D^{(2)}_n(\widehat{\theta}_0,I_n^\ast) -W\|_F \stackrel{P}{\rightarrow} 0$, in probability, see Lemma 7.2(i).  Furthermore,  by the same Lemma 4.2, we have, on the same set, that, with  probability arbitrarily close to one,  
\begin{equation} \label{eq.InvertD2}
 \|  (D^{(2)}_n(\widehat{\theta}_0,I_n^\ast))^{-1}\|_F \leq \|W^{-1}\|_F/(1-\delta) = 2\|W^{-1}\|_F,
 \end{equation}
for $ \delta=1/2$.
Recall  equation (\ref{eq.ML-Taylor}) and define on the set on which (\ref{eq.InvertD2})  holds true  the function 
\begin{equation}\label{eq.w-function}
w(\widehat{\theta}_0-\theta) = \big(D_n^{(2)}(\widehat{\theta}_0,I_n^\ast)\big)^{-1}\big[D_n^{(1)}(\widehat{\theta}_0,I^\ast_n) + \widetilde{R}_n(\theta) \big],  \ \ \ \theta\in {\mathcal B}(\widehat{\theta}_0,\delta),
\end{equation}
where $ {\mathcal B}(\widehat{\theta}_0,\delta) =\{x\in \Re^m| \|x\|\leq \delta\}$ and
\[ \widetilde{R}_n(\theta) = \big( D_n^{(2)}(\theta^+_n,I_n^\ast) - D_n^{(2)}(\widehat{\theta}_0,I_n^\ast)\big) (\theta-\widehat{\theta}_0) ,\]
with $ \|\widehat{\theta}_0- \theta^+_n\| \leq \|\widehat{\theta}_0-\theta\|$. Using the expansion
\[ D_n^{(1)}(\theta, I^\ast_n) = D_n^{(1)}(\widehat{\theta}_0,I^\ast_n) + D_n^{(2)}(\widehat{\theta}_0,I^\ast_n) (\theta-\widehat{\theta}_0) + \widetilde{R}_n(\theta),\]
we can also express $w(\cdot)$ as  
\[ w(\widehat{\theta}_0-\theta) = \big(D_n^{(2)}(\widehat{\theta}_0,I^\ast_n) \big)^{-1} \big[ D_n^{(1)}(\theta,I^\ast_n) - D_n^{(2)}(\widehat{\theta}_0, I^\ast_n)(\theta-\widehat{\theta}_0) \big].\]
From this and because,  by assumption, $ \widehat{\theta}_n^\ast$ is  the unique solution of $ D_n^{(1)}(\theta, I^\ast_n)=0$, we get that  for $\theta=\widehat{\theta}_n^\ast$  it holds true that $  w(\widehat{\theta}_0-\widehat{\theta}_n^\ast) =\widehat{\theta}_0-\widehat{\theta}_n^\ast$ and this is the unique solution of 
$w(\widehat{\theta}_0-\theta) =\widehat{\theta}_0-\theta$ .

We next show that for $ n$ large enough and with probability arbitrarily close to one, a  constant $ C>0$ exists such that $ \|w(\widehat{\theta}_0-\theta) \| \leq C\log(n)/\sqrt{n}$ 
 if $ \|\widehat{\theta}_0-\theta\|\leq C\log(n)/\sqrt{n}$. Toward this goal,  we get using (\ref{eq.w-function})  and the fact that $ D_n^{(1)}(\widehat{\theta}_0, \widehat{f}) =0$,  the bound
\begin{align} \label{eq.bound-w}
\|w(\widehat{\theta}_0-\theta) \|  \leq & \|\big(D_n^{(2)}(\widehat{\theta}_0,I_n^\ast)\big)^{-1}\|_F\Big( \|D_n^{(1)}(\widehat{\theta}_0,I^\ast_n)  - D_n^{(1)}(\widehat{\theta}_0,\widehat{f}) 
 \| + \|\widetilde{R}_n(\theta) \big)\|\Big).
\end{align}
Recall  the bound  $\|  (D^{(2)}_n(\widehat{\theta}_0,I_n^\ast))^{-1}\|_F \leq 2\|W^{-1}\|_F$. Furthermore, 
for the first term in parentheses on the right hand side of (\ref{eq.bound-w}), we have  
\[ \|D_n^{(1)}(\widehat{\theta}_0,I^\ast_n)  - D_n^{(1)}(\widehat{\theta}_0,\widehat{f}) 
 \|  = \|Y^\ast_n\|,\]
 where  
 $$ Y^\ast_n= \frac{1}{n}\sum_{j\in{\mathcal G}(n)} \frac{\partial}{\partial\theta} \frac{1}{f_\theta(\lambda_{j,n})}\Big|_{\theta=\widehat{\theta}_0}\widehat{f}(\lambda_{j,n})(U_j^\ast-1).$$
It yields,
\begin{align*}
n{\rm E}^\ast\|Y^\ast_n\|^2 = & \frac{2}{n}\sum_{j\in{\mathcal G}(n)}\Big( \frac{\partial}{\partial\theta} \frac{1}{f_\theta(\lambda_{j,n})}\Big|_{\theta=\widehat{\theta}_0}\Big)^\top\Big(\frac{\partial}{\partial\theta} \frac{1}{f_\theta(\lambda_{j,n})}\Big|_{\theta=\widehat{\theta}_0} \Big)\widehat{f}(\lambda_{j,n})^2\\
& \stackrel{P}{\rightarrow} \frac{1}{\pi} \int_{-\pi}^\pi \Big( \frac{\partial}{\partial\theta} \frac{1}{f_\theta(\lambda)}\Big|_{\theta=\theta_0}\Big)^\top\Big(\frac{\partial}{\partial\theta} \frac{1}{f_\theta(\lambda)}\Big|_{\theta=\theta_0} \Big)f(\lambda)^2, 
\end{align*}
by Assumption 3, the continuity of the derivative and  Lemma 7.1(i). This implies by Markov's inequality, that,
\begin{align*}
{\rm P}^\ast\big(\|D_n^{(1)}(\widehat{\theta}_0,I^\ast_n)  - D_n^{(1)}(\widehat{\theta}_0,\widehat{f}) 
 \|  \geq C\log(n)/\sqrt{n}\big)& \leq \frac{n{\rm E}\|Y_n^\ast\|^2}{C^2\log^2(n)}  =O_P(1/\log^2(n)).
\end{align*}
 For the term $ \widetilde{R}_n(\theta)$ in (\ref{eq.bound-w}) we use the bound 
\[ \|\widetilde{R}_n(\theta)\| \leq \big( \|M_{1,n}\|_F + \|M_{2,n}\|_F\big) \|\theta-\widehat{\theta}_0\|,\]
where 
\[ M_{1,n}=\frac{1}{n} \sum_{j\in{\mathcal G}(n)} \Big(\frac{\partial^2}{\partial\theta \partial\theta^{\top}}\log f_\theta (\lambda_{j,n})\Big|_{\theta=\theta_n^+} - 
 \frac{\partial^2}{\partial\theta \partial\theta^{\top}}\log f_\theta (\lambda_{j,n})\Big|_{\theta=\widehat{\theta}_0}\Big)\]
and
\[ M_{2,n}=\frac{1}{n} \sum_{j\in{\mathcal G}(n)} \Big(\frac{\partial^2}{\partial\theta \partial\theta^{\top}}\frac{1}{f_\theta (\lambda_{j,n})}\Big|_{\theta=\theta_n^+} - 
 \frac{\partial^2}{\partial\theta \partial\theta^{\top}}\frac{1}{f_\theta (\lambda_{j,n})}\Big|_{\theta=\widehat{\theta}_0}\Big)I^\ast_n(\lambda_{j,n}).\]
 By the   Lipschitz continuity of the second order derivatives, following from Assumption 3, 
 and since $ \|\theta_n^+-\widehat{\theta}_0\| \leq \|\theta- \widehat{\theta}_0\|$,  we get  $ \|\widetilde{R}_n(\theta) \| \leq  C\|\widehat{\theta}_0 -\theta\|$.
 Hence   if  $ \|\widehat{\theta}_0-\theta\|\leq C\log(n)/\sqrt{n}$ then,   for $n$ large enough and with probability arbitrarily close to one,  we have that, $ \|w(\widehat{\theta}_0-\theta) \| \leq C\log(n)/\sqrt{n}$. Consider  next the function $ g : {\mathcal B}(0,1)\rightarrow {\mathcal B}(0,1)$ defined  as 
 \[ g(x) = \frac{\sqrt{n}}{C\log(n)} w\big(\frac{C\log(n)}{\sqrt{n}}\cdot x\big).\]
 Notice that $ g$ is continuous and that  because for $ x\in{\mathcal B}(0,1)$, $ \| C\log(n) x/\sqrt{n}\| \leq C\log(n)/\sqrt{n}$, we have,
 \[ \|g(x)\|  = \frac{\sqrt{n}}{C\log(n)} \|w(\frac{C\log(n)}{\sqrt{n}} x) \| \leq \frac{\sqrt{n}}{C\log(n)}\cdot \frac{C \log(n)}{\sqrt{n}}    =1.\]
  By Bronwer's fixed point Theorem, see Lahiri (2003), Proposition 4.1, there exists  $x_0\in {\mathcal B}(0,1)$  such that $g(x_0)=x_0$, that is,
 \[ w\big(\frac{C\log(n)}{\sqrt{n}}\cdot x_0\big) =  \frac{{C\log(n)}}{\sqrt{n}} x_0.\]
 Since  $ \widehat{\theta}_0 -\widehat{\theta}^\ast_n$ is the unique  solution of $ w(\widehat{\theta}_0-\theta) = \widehat{\theta}_0-\theta$, we have  that 
 $  \widehat{\theta}_0 -\widehat{\theta}^\ast_n =x_0 C\log(n)/ \sqrt{n} $, that is, 
 \[ \| \widehat{\theta}_0 -\widehat{\theta}^\ast_n\| \leq \frac{C\log(n)}{\sqrt{n}}\|x_0\| \leq \frac{ C\log(n)}{\sqrt{n}}.\]
Hence for $n$ large enough and   with probability arbitrarily close to one, we have 
\[ \| \widehat{\theta}_0 -\widehat{\theta}^\ast_n\| = O_P\Big( \frac{\log(n)}{\sqrt{n}}\Big),\]
which converges to zero, as $n\rightarrow \infty$.
\end{proof}

\begin{lemma} \label{le.Appendix2}  Suppose that Assumption 1 to Assumption 4 are satisfied.  Then, as $ n \rightarrow\infty$,
\begin{enumerate}
\item[(i)] \ $ W^\ast_n \stackrel{P}{\rightarrow} W$,
\item[(ii)] \ $V_{1,n}^\ast \stackrel{P}{\rightarrow} V_1$, 
\item[(iii)] \  $V^+_{2,n} \stackrel{P}{\rightarrow} V_2$.
\end{enumerate}
\end{lemma}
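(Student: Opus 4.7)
For part~(i), the plan is to expand $W_n^\ast$ as the sum of a purely data-dependent piece $A_n = n^{-1}\sum_{j\in{\mathcal G}(n)}(\partial^2/\partial\theta\partial\theta^\top)\log f_\theta(\lambda_{j,n})|_{\theta=\widehat\theta_0}$ and a bootstrap piece $B_n^\ast=n^{-1}\sum_{j\in{\mathcal G}(n)}(\partial^2/\partial\theta\partial\theta^\top) f_\theta^{-1}(\lambda_{j,n})|_{\theta=\widehat\theta_0}\cdot I_n^\ast(\lambda_{j,n})$. The Riemann sum $A_n$ converges in probability by continuity of the second-order derivatives (Assumption~2), their uniform boundedness, and $\widehat\theta_0\stackrel{P}{\rightarrow}\theta_0$ from Lemma~\ref{le.Appendix1}(i). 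The bootstrap piece $B_n^\ast$ has conditional mean equal to the analogous Riemann sum against $\widehat f$, which converges by Assumption~3 to the corresponding integral against $f$; its bootstrap variance is $O(n^{-1})$ since the $U_j^\ast$ are i.i.d.\ with unit variance and the weights are uniformly bounded. Adding the two limits reproduces $W=(\partial^2/\partial\theta\partial\theta^\top)D(\theta_0,f)$.

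Part~(ii) is a direct bootstrap variance computation. From $I_n^\ast(\lambda_{j,n})=\widehat f(\lambda_{j,n})U_j^\ast$ with the $U_j^\ast$ i.i.d.\ standard exponential for $j=1,\ldots,N$, the independence of $I_n^\ast$ across positive Fourier frequencies together with the enforced symmetry $I_n^\ast(-\lambda)=I_n^\ast(\lambda)$ reduces $V_{1,n}^\ast$ to
\[ V_{1,n}^\ast \;=\; \frac{8\pi^2}{n}\sum_{j\in{\mathcal G}(n)}g_{\widehat\theta_0}(\lambda_{j,n})g_{\widehat\theta_0}(\lambda_{j,n})^\top\widehat f(\lambda_{j,n})^2, \]
up to negligible contributions from $\lambda_{j,n}\in\{0,\pm\pi\}$. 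Assumption~3 and the continuity of $g_{\theta_0}$ identify the Riemann-sum limit with $V_1$.

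Part~(iii) is the main obstacle. Conditionally on the data, the indices $i_1,\ldots,i_k$ are i.i.d.\ uniform, so the vectors $Y^{(\ell)}=(2\pi/b)\sum_{j\in{\mathcal G}(b)}g_{\widehat\theta_0}(\lambda_{j,b})(I_b^{(\ell)}-\widehat f)(\lambda_{j,b})$ are conditionally i.i.d.\ and $\Sigma_n^+=\mathrm{Var}^\ast(M_n^+)=b\,\mathrm{Var}^\ast(Y^{(1)})$. A direct second-moment computation, using $(n-b+1)^{-1}\sum_t R_j^{(t)}=1$ with $R_j^{(t)}:=I_b^{(t)}(\lambda_{j,b})/\widetilde f(\lambda_{j,b})$, expresses $\Sigma_n^+$ as a deterministic (given the sample) double sum over $(j,k)\in{\mathcal G}(b)^2$ weighted by the bracket $(n-b+1)^{-1}\sum_t R_j^{(t)}R_k^{(t)}-1$. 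I would then partition ${\mathcal G}(b)^2=D\cup D^c$ with $D=\{(j,k):|\lambda_{j,b}|=|\lambda_{k,b}|\}$, on which $I_b^{(t)}(\lambda_{j,b})=I_b^{(t)}(\lambda_{k,b})$. Using the hermitian symmetry of the periodogram and the evenness of $g_{\widehat\theta_0}$ and $\widehat f$, a short algebraic check shows that the $D$-contribution equals $C_n^+$ exactly, so $V_{2,n}^+=\Sigma_n^+-C_n^+$ isolates the $D^c$-piece. On $D^c$ the periodogram covariance expansion \fer{eq.CovPer} applied to the subsample periodograms, combined with uniform consistency of $\widetilde f$ for $f$, replaces the bracketed term by $f_4(\lambda_{j,b},\lambda_{k,b},-\lambda_{k,b})/(b f(\lambda_{j,b}) f(\lambda_{k,b}))$ up to a negligible remainder; the resulting expression is a double Riemann sum on $[-\pi,\pi]^2$ with mesh $2\pi/b$ whose limit is $V_2$.

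The hardest step -- and the one that forces Assumption~4 -- is establishing the uniform law of large numbers
\[ \max_{(j,k)\in{\mathcal G}(b)^2}\bigg|\frac{1}{n-b+1}\sum_{t=1}^{n-b+1}I_b^{(t)}(\lambda_{j,b})I_b^{(t)}(\lambda_{k,b})-E\big[I_b(\lambda_{j,b})I_b(\lambda_{k,b})\big]\bigg|\stackrel{P}{\rightarrow}0. \]
Each single-pair variance involves sums of second- through eighth-order cumulants of $\{X_t\}$ over lag windows of length $b$, so under the summability in Assumption~1 the variance is of order $b^2/n$ and the bias of order $1/b$ per pair; a union bound over the $O(b^2)$ pairs then demands $b^3/n\to 0$, which is exactly Assumption~4. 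I expect the cumulant bookkeeping behind this uniform LLN to be the most delicate part of the proof, while (i) and (ii) reduce to comparatively routine bootstrap mean-plus-variance arguments.
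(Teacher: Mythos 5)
Your parts (i) and (ii) follow essentially the same route as the paper: for (i), separate the second-order-derivative sum into a nonrandom Riemann piece and a bootstrap piece, use $\widehat\theta_0\stackrel{P}{\rightarrow}\theta_0$ from Lemma~\ref{le.Appendix1}(i) together with uniform continuity of the derivatives and Assumption~3; for (ii), exploit independence of $I_n^\ast$ across positive Fourier frequencies plus the enforced symmetry to reduce the bootstrap variance to the stated Riemann sum. Both arguments are routine and correct.

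For part~(iii), however, you take a genuinely different and arguably more transparent route than the paper. The paper proves the two limits $\Sigma_n^+\stackrel{P}{\rightarrow}V_1+V_2$ and $C_n^+\stackrel{P}{\rightarrow}V_1$ separately (both via Lemma~4.1 of Meyer et al.\ (2020)) and subtracts at the limit. You instead express $\Sigma_n^+$ as a double sum over $(j,k)\in{\mathcal G}(b)^2$ weighted by $(n-b+1)^{-1}\sum_t R_j^{(t)}R_k^{(t)}-1$, partition into the diagonal set $D=\{|\lambda_{j,b}|=|\lambda_{k,b}|\}$ and its complement, and observe that by hermitian symmetry of the periodogram and evenness of $g_{\widehat\theta_0}$ and $\widehat f$ the $D$-contribution is \emph{exactly} $C_n^+$ (up to the boundary frequencies $0,\pm\pi$). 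This finite-sample identity, which the paper does not note, makes the definition of $C_n^+$ self-explaining: $V_{2,n}^+=\Sigma_n^+-C_n^+$ is precisely the off-diagonal piece of $\Sigma_n^+$, and one then shows that piece converges to $V_2$ using the periodogram covariance expansion \fer{eq.CovPer} applied at the subsample scale. That is a cleaner conceptual organisation than the paper's, which estimates two separate quantities and lets the $V_1$-parts cancel only in the limit.

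Two cautionary points on your sketch of the hard estimate. First, your rate accounting is internally inconsistent: a per-pair variance of $O(b^2/n)$ union-bounded over $O(b^2)$ pairs would give $O(b^4/n)$, which is stronger than Assumption~4. The correct per-pair variance for an average of $n-b+1$ roughly $b$-dependent terms (each an $O(1)$-variance product of two length-$b$ periodograms, finite by the eighth-order summability in Assumption~1) is $O(b/n)$, and only then does the $O(b^2)$ union bound return $b^3/n\rightarrow 0$. Second, a sup-over-pairs uniform LLN is stronger than needed here. What is actually required is that the $b^{-1}$-weighted double sum (an averaged quantity) converges, and this weaker $L^1$-type statement is exactly what Lemma~4.1 of Meyer et al.\ (2020), which the paper invokes, delivers. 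Your max-bound version could in principle also be pushed through, but it demands extra care in the cumulant bookkeeping for no additional benefit, and if done via Chebyshev plus union bound would likely leave you a power of $b$ short of Assumption~4 unless you exploit the averaging anyway. I would recommend replacing the max-bound step by a direct second-moment bound on the weighted double sum, as in the cited lemma.
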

\begin{proof} 
Notice first that  
\begin{equation} \label{eq.lem2-1}
\sup_{\lambda \in[\pi,\pi]}\big\| g_{\widehat{\theta}_0}(\lambda) -  g_{\theta_0}(\lambda) \big\| \stackrel{P}{\rightarrow} 0.
\end{equation}
and 
\begin{equation} \label{eq.lem2-2}
\sup_{\lambda\in[\pi,\pi]}\Big\| \frac{\partial^2}{\partial\theta\partial\theta^\top}f^{-1}_\theta(\lambda)\Big|_{\theta=\widehat{\theta}_0} - 
\frac{\partial^2}{\partial\theta\partial\theta^\top}f^{-1}_\theta(\lambda)\Big|_{\theta=\theta_0}   \Big\|_F \stackrel{P}{\rightarrow} 0.
\end{equation}
To see why the  above assertions hold true, observe first that   by  Lemma~\ref{le.Appendix1}(i) we have   for $n$ large enough, $ P(\widehat{\theta}_0\in B(\theta_0,\epsilon))\geq 1-\epsilon$,  where $ B(\theta_0, \epsilon)=\{\theta: \|\theta-\theta_0\| \leq \epsilon\} \subset \Theta$.
(\ref{eq.lem2-1}) and  (\ref{eq.lem2-2}) follow then because the functions $g_\theta (\lambda)$ and  
$\partial^2\big/(\partial\theta\partial\theta^\top)f^{-1}_\theta(\lambda) $ are  uniformly continuous on the compact set $ [-\pi,\pi]\times B (\theta_0, \epsilon)$.

Consider (i). We have
\begin{align*}
W^\ast_n & = \frac{1}{n}\sum_{j\in{\mathcal G}(n)}  \frac{\partial^2}{\partial\theta\partial\theta^\top}\Big( \log f_\theta(\lambda_{j,n}) + I^\ast_n(\lambda_{j,n}) f^{-1}_\theta(\lambda)\Big)\Big|_{\theta=\theta_0}  \\
& \ \ +\frac{1}{n}\sum_{j\in{\mathcal G}(n)} \Big(  \frac{\partial^2}{\partial\theta\partial\theta^\top} \log f_\theta(\lambda_{j,n})\Big|_{\theta=\widehat{\theta}_0} -  \frac{\partial^2}{\partial\theta\partial\theta^\top} \log f_\theta(\lambda_{j,n})\Big|_{\theta=\theta_0}\Big)\\
& \  \ + \frac{1}{n}\sum_{j\in{\mathcal G}(n)} \Big( \frac{\partial^2}{\partial\theta\partial\theta^\top} f^{-1}_\theta(\lambda_{j,n})\Big|_{\theta=\widehat{\theta}_0}
- \frac{\partial^2}{\partial\theta\partial\theta^\top} f^{-1}_\theta(\lambda_{j,n})\Big|_{\theta=\theta_0}\Big) I_n^\ast(\lambda_{j,n})\\
&=  \frac{1}{n}\sum_{j\in{\mathcal G}(n)}  \frac{\partial^2}{\partial\theta\partial\theta^\top}\log f_\theta(\lambda_{j,n})\Big|_{\theta=\theta_0} 
+ \frac{1}{n}\sum_{j\in{\mathcal G}(n)}
  \frac{\partial^2}{\partial\theta\partial\theta^\top}f^{-1}_\theta(\lambda)\Big|_{\theta=\theta_0}  I^\ast_n(\lambda_{j,n})  + o_P(1) \\
&\stackrel{P}{\rightarrow}   \frac{\partial^2}{\partial\theta\partial\theta^\top}D(\theta_0,f).
\end{align*}
The last equality follows because by  (\ref{eq.lem2-2}),
\begin{align*}
\Big\|\frac{1}{n}\sum_{j\in{\mathcal G}(n)} &\Big(   \frac{\partial^2}{\partial\theta\partial\theta^\top} f^{-1}_\theta(\lambda_{j,n})\Big|_{\theta=\widehat{\theta}_0}
- \frac{\partial^2}{\partial\theta\partial\theta^\top} f^{-1}_\theta(\lambda_{j,n})\Big|_{\theta=\theta_0}\Big) I_n^\ast(\lambda_{j,n})\Big\|_F\\
& \leq \sup_{\lambda\in[-\pi,\pi]} \Big\| \frac{\partial^2}{\partial\theta\partial\theta^\top}f^{-1}_\theta(\lambda)\big|_{\theta=\widehat{\theta}_0} - 
\frac{\partial^2}{\partial\theta\partial\theta^\top}f^{-1}_\theta(\lambda)\big|_{\theta=\theta_0}   \Big\|_F\times  \frac{1}{n}\sum_{j\in{\mathcal G}(n)}I^\ast_n(\lambda_{j,n})\\
& = o_P(1)\cdot O_P(1).
\end{align*}
Consider (ii).  Recall that $V_{1,n}^\ast ={\rm Var}^\ast(M^\ast_n)$  and   that
$ {\rm Cov}(I^\ast_n(\lambda_{j,n}),I^\ast_n(\lambda_{k,n}) )= {\bf 1}_{\{|j|=|k|\}} \widehat{f}(\lambda_{j,n})^2$. Hence 
\begin{align*}
{\rm Var}^\ast(M_n^\ast) & = \frac{4\pi^2}{n} \sum_{j=1}^N\big( g_{\widehat{\theta}_0}(-\lambda_{j,n})   g^\top_{\widehat{\theta}_0}(-\lambda_{j,n})  + g_{\widehat{\theta}_0}(\lambda_{j,n})   g^\top_{\widehat{\theta}_0}(-\lambda_{j,n}) \\
& \ \ \ \  \ \ \ \ \ \ \ + g_{\widehat{\theta}_0}(-\lambda_{j,n})   g^\top_{\widehat{\theta}_0}(\lambda_{j,n})  +
g_{\widehat{\theta}_0}(\lambda_{j,n})   g^\top_{\widehat{\theta}_0}(\lambda_{j,n})\big)
\widehat{f}(\lambda_{j,n})^2\\
& =  \frac{8\pi^2}{n} \sum_{j\in{\mathcal G}(n)} g_{\theta_0}(\lambda_{j,n})   g^\top_{\theta_0}(\lambda_{j,n}) f(\lambda_{j,n})^2 +o_P(1)\\
& \stackrel{P}{\rightarrow} V_1,
\end{align*}
where the last equality  follows using   the symmetry of $  g_{\theta}(\lambda)$ with respect to $\lambda$,  assertion (\ref{eq.lem2-1}) and  Assumption 3. 

Consider (iii). We have 
\begin{align*}
\Sigma_n^+ & = \frac{b}{k}\sum_{\ell=1}^k{\rm Var}^*\Big( \frac{2\pi}{b}\sum_{j\in {\mathcal G}(b)} g_{\widehat{\theta}_0}(\lambda_{j,b}) I_b^{(\ell)}(\lambda_{j,b})\Big)\\
& = \frac{4\pi^2}{b}\sum_{j_1\in{\mathcal G}(b)} \sum_{j_2\in{\mathcal G}(b)}  g_{\widehat{\theta}_0}(\lambda_{j_1,b}) g^\top_{\widehat{\theta}_0}(\lambda_{j_2,b}) {\rm Cov}^\ast\big( I_b^{(i_1)}(\lambda_{j_1,b}),I_b^{(i_1)}(\lambda_{j_2,b}) \big)\\
& = \frac{4\pi^2}{b}\sum_{j_1\in{\mathcal G}(b)} \sum_{j_2\in{\mathcal G}(b)}  g_{\widehat{\theta}_0}(\lambda_{j_1,b}) g^\top_{\widehat{\theta}_0}(\lambda_{j_2,b}) 
\\
& \ \ \ \ \ \ \  \times \frac{1}{n-b+1}\sum_{t=1}^{n-b+1}\big\{I_b^{(t)}(\lambda_{j_1,b}),I_b^{(t)}(\lambda_{j_2,b} - \widetilde{f}(\lambda_{j_1,b}) \widetilde{f}(\lambda_{j_1,b} )  \big\}\\
&
= \frac{4\pi^2}{b}\sum_{j_1\in{\mathcal G}(b)} \sum_{j_2\in{\mathcal G}(b)}  g_{\widehat{\theta}_0}(\lambda_{j_1,b}) g^\top_{\widehat{\theta}_0}(\lambda_{j_2,b}) 
 {\rm Cov}\big( I_b^{(1)}(\lambda_{j_1,b}),I_b^{(1)}(\lambda_{j_2,b}) \big) + o_P(1)\\
\end{align*} 
where the $o_P(1)$ term follows using   Lemma 4.1  of Meyer et al. (2020) and Assumption 4. Thus using the covariance properties of the periodogram $ I_b^{(1)}(\lambda_{j,b})$ of the subsample $ X_1, X_2, \ldots, X_b$ and (\ref{eq.lem2-1}), we get  that, as  $b\rightarrow\infty$, 
\[ \frac{4\pi^2}{b}\sum_{j_1\in{\mathcal G}(b)} \sum_{j_2\in{\mathcal G}(b)}  g_{\widehat{\theta}_0}(\lambda_{j_1,b}) g^\top_{\widehat{\theta}_0}(\lambda_{j_2,b})  
 {\rm Cov}\big( I_b^{(1)}(\lambda_{j_1,b}),I_b^{(1)}(\lambda_{j_2,b}) \big) \stackrel{P}{\rightarrow} V_1 + V_2.\]
By the same lemma   we also have   
$$(n-b+1)^{-1}\sum_{t=1}^{n-b+1} I_b^{(t)}(\lambda_{j,b})^2/\widetilde{f}_b(\lambda_{j,b})^2 \stackrel{P}{\rightarrow} 2,$$
  which implies
using (\ref{eq.lem2-1}) again and Assumption 3, that  $ C_{n}^+ \stackrel{P}{\rightarrow} V_1$, in probability. The assertion follows then since    $ V^+_{2,n}= \Sigma^+_n-C_n^+$. 
\end{proof}

{\bf Proof of Theorem~\ref{th.bootWhittle}:} \    
In  view of the definition of $ L_n^\ast$ and  Lemma~\ref{le.Appendix2},  to establish  the assertion of the theorem it suffices to show that,  
\begin{equation}
\label{eq.L_nStar}
\sqrt{n}(\widehat{\theta}_n^\ast - \widehat{\theta}_0)  \stackrel{D}{\rightarrow} {\mathcal N}(0, W^{-1}V_1 W^{-1}).
\end{equation}
Since  $ D_n^{(2)}(\widehat{\theta}_0,I^\ast_n)= W^\ast_n$ we get  by Lemma~\ref{le.Appendix2}(i),  that  $ \| D_n^{(2)}(\widehat{\theta}_0, I^\ast_n) - W\|_F \stackrel{P}{\rightarrow} 0 $ and that,  for $n$ large enough,   $ D_n^{(2)}(\widehat{\theta}_0,I^\ast_n)$ is nonsingular; see also the proof of Lemma~\ref{le.Appendix1}(ii).
Recall  that  $ D^{(1)}_n(\widehat{\theta}_n^\ast,I^\ast_n) =0$ and  that $ D^{(1)}_n(\widehat{\theta}_0,\widehat{f}) =0 $. We 
 get using  the expansion 
(\ref{eq.ML-Taylor}), that 
\begin{align} \label{eq.proof-distr1}
\big[I_m +   \big(D_n^{(2)}(\widehat{\theta}_0,I^\ast_n)\big)^{-1} & R_n(\widehat{\theta}^\ast_n)\big]\sqrt{n}(\widehat{\theta}_n^\ast-\widehat{\theta}_0) \nonumber  \\
&  = - \big(D_n^{(2)}(\widehat{\theta}_0,I^\ast_n)\big)^{-1} \sqrt{n}\big( D^{(1)}_n(\widehat{\theta}_0,I^\ast_n)-  D^{(1)}_n(\widehat{\theta}_0,\widehat{f}) \big).
\end{align}
Since $ \big(D_n^{(2)}(\widehat{\theta}_0,I^\ast_n)\big)^{-1}\stackrel{P}{\rightarrow} W^{-1}$, in probability,   assertion (\ref{eq.L_nStar}) follows from 
expression  (\ref{eq.proof-distr1}),  if we show that 
\begin{equation}\label{eq.distr-part1}
-  \sqrt{n}\big( D^{(1)}_n(\widehat{\theta}_0,I^\ast_n)-  D^{(1)}_n(\widehat{\theta}_0,\widehat{f}) \big) \stackrel{D}{\rightarrow} {\mathcal N}(0,V_1),
\end{equation}
and
\begin{equation}\label{eq.distr-part2}
 R_n(\widehat{\theta}_n^\ast)=o_P(1),
\end{equation}
in probability.

For (\ref{eq.distr-part1}) we have, 
\begin{align*}
-  \sqrt{n}\big( D^{(1)}_n(\widehat{\theta}_0,I^\ast_n)-  D^{(1)}_n(\widehat{\theta}_0,\widehat{f}) \big)  & = -\frac{1}{\sqrt{n}}\sum_{j\in{\mathcal G}(n)} \frac{\partial}{\partial \theta} \frac{1}{f_\theta (\lambda_{j,n})}\Big|_{\theta=\widehat{\theta}_0}(I^\ast_n(\lambda_{j,n}) - \widehat{f}(\lambda_{j,n}))\\
& =\frac{1}{\sqrt{n}}\sum_{j=1}^N W_{j,n}  (U^\ast_j-1),
\end{align*}
where $ W_{j,n}=(g_{\widehat{\theta}_0}(\lambda_{j,n}) + g_{\widehat{\theta}_0}(-\lambda_{j,n}))\widehat{f}(\lambda_{j,n})$ and the $U^\ast_j$'s are i.i.d. (\ref{eq.distr-part1})  follows then because $ {\rm E}^\ast(W_{j,n}  (U^\ast_j-1))=0$,  $ {\rm Var}^\ast(W_{j,n}  (U^\ast_j-1))\stackrel{P}{\rightarrow} V_1$ by the same arguments as those used in the proof that $ {\rm Var}^\ast(M_n^\ast) \stackrel{P}{\rightarrow} V_1$ in Lemma~\ref{le.Appendix2}(ii),
and because 
\begin{align*}
\sum_{j=1}^N {\rm E}^\ast\|\frac{1}{\sqrt{n}}W_{j,n}  (U^\ast_j-1) \|^3 &  ={\rm E}^\ast(|U^\ast_j-1|^3)\frac{1}{n^{3/2}}\sum_{j=1}^N \|W_{j,n}\|^3=O_P(n^{-1/2}),
\end{align*}
verifies Liapunov's condition. 

Consider (\ref{eq.distr-part2}) and observe that $  R_n(\widehat{\theta}^\ast_n)=D^{(2)}_n(\theta^+_n,I^\ast_n)-D^{(2)}_n(\widehat{\theta}_0,I^\ast_n)$ for  some $\theta^+_n\in \Theta$ such that $ \|\theta^+_n-\widehat{\theta}_0\|\leq \|\widehat{\theta}^\ast_n-\widehat{\theta}_0\|$. Denote by $ d_{j,k}(\mathring \theta)$ the $(j,k)$th element of the matrix
$ D^{(2)}_n(\mathring \theta,I^\ast_n)$, that is, $  d_{j,k}(\mathring \theta) = \partial^2\big/(\partial \theta_j\partial \theta_k) D_n(\theta,I^\ast)|_{\theta=\mathring\theta}$. We then have for the $(j,k)$th element  $ r_{j,k}(\widehat{\theta}_n^\ast)$ of the matrix $ R_n(\widehat{\theta}^\ast_n)$,
\begin{align*}
 | r_{j,k}(\widehat{\theta}_n^\ast)| & =  |d_{j,k}(\theta^+_n) -d_{j,k}(\widehat\theta_0)|  \\
& \leq \sup_{\lambda\in[-\pi,\pi]} \big|  \frac{\partial^2}{\partial\theta_j\partial\theta_k} \log f_\theta(\lambda)\Big|_{\theta=\theta^+_n} -  \frac{\partial^2}{\partial\theta_j\partial\theta_k} \log f_\theta(\lambda)\Big|_{\theta=\widehat\theta_0}\big|\\
& \ \ + \sup_{\lambda\in[-\pi,\pi]} \big| \frac{\partial^2}{\partial\theta_j\partial\theta_k} f^{-1}_\theta(\lambda)\Big|_{\theta=\theta^+_0}
- \frac{\partial^2}{\partial\theta_j\partial\theta_k} f^{-1}_\theta(\lambda)\Big|_{\theta=\widehat\theta_0}\big|\frac{1}{n}\sum_{j\in{\mathcal G}(n)}I_n^\ast(\lambda_{j,n}) \\
&=o_{P}(1),
\end{align*}
as $ n \rightarrow\infty$, since $ n^{-1}\sum_{j\in{\mathcal G}(n)}I_n^\ast(\lambda_{j,n})=O_P(1)$, the second order partial derivative functions  $\partial^2\big/(\partial\theta_j\partial\theta_k) \log f_\theta(\lambda)$
and $ \partial^2\big/(\partial\theta_j\partial\theta_k) f^{-1}_\theta(\lambda)$  are uniformly continuous of the compact set $ [-\pi,\pi]\times\Theta$ and
$ \|\theta^+_n-\widehat{\theta}_0\| \leq  \|\widehat{\theta}^\ast_n-\widehat{\theta}_0\|  \stackrel{P}{\rightarrow} 0$, 
in probability, by Lemma~\ref{le.Appendix1}(ii). \hfill$\Box$

\renewcommand{\baselinestretch}{1.1}
\small\normalsize

\end{document}